   \newcommand{\A}{{\mathbb A}}
   \newcommand{\fg}{{finitely generated }}
   \newcommand{\Aff}{{\operatorname{Aff}}}
 \newcommand{\dist}{{\operatorname{dist}}}
   \newcommand{\Spec}{\operatorname{Spec}}
   \newcommand{\gr}{\operatorname{gr}}
   \newcommand{\Hom}{\operatorname{Hom}} 
\newcommand{\Diag}{\operatorname{Diag}}  
\newcommand{\Pext}{\operatorname{Pext}}
\newcommand{\Ad}{\operatorname{Ad}}
   \newcommand{\CH}{\operatorname{CH}} 
\newcommand{\Log}{\operatorname{Log}} 
\newcommand{\id}{\operatorname{id}} 
\newcommand{\Aut}{\operatorname{Aut}}
\newcommand{\eq}[1]{\begin{equation}#1\end{equation}}
\newcommand{\eQ}[1]{\begin{equation*}#1\end{equation*}}
\newcommand{\spl}[1]{\begin{split}#1\end{split}}
 \newcommand{\Ext}{\operatorname{Ext}} 
\newcommand{\diag}{\operatorname{diag}}
\newcommand{\cpah}{completely positive asymptotic homomorphism}
\newcommand{\dcpah}{discrete completely positive asymptotic homomorphism}
\newcommand{\ah}{asymptotic homomorphism }
\newcommand{\cone}{\operatorname{cone}}
\newcommand{\Bott}{\operatorname{Bott}}
\newcommand{\im}{\operatorname{im}}
\newcommand{\co}{\operatorname{co}}
\newcommand{\Span}{\operatorname{Span}}
\newcommand{\Tr}{\operatorname{Tr}}
\newcommand{\diam}{\operatorname{diam}}
\newcommand{\Int}{\operatorname{Int}}
\newcommand{\Per}{\operatorname{Per}}
 \newcommand{\Depth}{\operatorname{Depth}} 
 \newcommand{\period}{\operatorname{period}} 
 \newcommand{\mit}{\operatorname{ MIN\left(\complement L\right)   }}
 \newcommand{\ins}{\operatorname{int}} 
 \newcommand{\Rec}{\operatorname{Rec}}
 \newcommand{\supp}{\operatorname{supp}}
\newcommand{\con}{\operatorname{context}}
\newcommand{\gd}{\operatorname{gcd}}
\newcommand{\R}{\operatorname{\mathcal R^-}}
\newcommand{\Det}{\operatorname{Det}}
\newcommand{\Dim}{\operatorname{Dim}}
\newcommand{\Rank}{\operatorname{Rank}}
\newcommand{\Is}{\operatorname{Is}}
\newcommand{\alg}{\operatorname{alg}}
 \newcommand{\Prim}{\operatorname{Prim}}
 \newcommand{\tr}{\operatorname{tr}}
\newcommand{\inv}{^{-1}}
\newcommand{\N}{{\mathbb{N}}}
\newcommand{\Z}{{\mathbb{Z}}}
\newcommand{\T}{{\mathbb{T}}}
\newcommand{\cip}{{\mathcal{M}}}
\newcommand{\cipp}{{\cip_{\per}}}
\newcommand{\cipa}{{\cip_{\aper}}}
\newcommand{\ev}{\operatorname{ev}}
\newcommand{\Sec}{\operatorname{Sec}}
\newcommand{\sign}{\operatorname{sign}}
\newcommand{\pt}{\operatorname{point}}
\newcommand{\cof}{\operatorname{cof}}
\newcommand{\Cone}{\operatorname{Cone}}
\newcommand{\val}{\operatorname{val}}
\newcommand{\INT}{\operatorname{int}}
\newcommand{\Ro}{\operatorname{RO}}
\newcommand{\Orb}{\operatorname{Orb}}
\newcommand{\Sp}{\operatorname{Sp}}
\newcommand{\Cay}{\operatorname{Cay}}
\newcommand{\Geo}{\operatorname{Geo}}
\newcommand{\words}[1][]{\mathbb{W}_{#1}(S)}
\newcommand{\Br}{\operatorname{Br}}
\newcommand{\Wan}{\operatorname{Wan}}
\newcommand{\Ray}{\operatorname{Ray}}
\newcommand{\cl}{\operatorname{Cl}}
\newcommand{\Nar}{\operatorname{Nar}}
   \newcommand{\nsum}{{\sum_{i=1}^n}}
   \newcommand{\ncap}{{\cap_{i=1}^n}}
   \newcommand{\bncap}{{\bigcap_{i=1}^n}}
   \newcommand{\bnoplus}{{\bigoplus_{i=1}^n}}
   \newcommand{\noplus}{{\oplus_{i=1}^n}}
   \newcommand{\nprod}{{\prod_{i=1}^n}}
   \newcommand{\ncup}{{\cup_{i=1}^n}}
   \newcommand{\bncup}{{\bigcup_{i=1}^n}}
\newcommand{\defeq}{\stackrel{\text{def}}{=}}
\DeclareMathOperator{\orb}{Orb}
\DeclareMathOperator{\per}{Per}
\DeclareMathOperator{\aper}{Aper}
   \theoremstyle{plain}
   \newtheorem{thm}{Theorem}[section]
   \newtheorem{prop}[thm]{Proposition}
   \newtheorem{lemma}[thm]{Lemma}  
   \newtheorem{cor}[thm]{Corollary}
   \theoremstyle{definition}
   \theoremstyle{remark}
   \newtheorem{remark}[thm]{Remark}
\newtheorem{q}[thm]{Question}
\newtheorem{poem}[thm]{Additional properties}
   \newcommand{\refthm}[1]{Theorem~\ref{#1}}
   \newcommand{\refprop}[1]{Proposition~\ref{#1}}
   \newcommand{\reflemma}[1]{Lemma~\ref{#1}}
   \newcommand{\refcor}[1]{Corollary~\ref{#1}}
   \newcommand{\refdefn}[1]{Definition~\ref{#1}}
   \newcommand{\refremark}[1]{Remark~\ref{#1}}
   \newcommand{\refexample}[1]{Example~\ref{#1}}
\providecommand{\abs}[1]{\lvert#1\rvert}
\providecommand{\norm}[1]{\lVert#1\rVert}
\definecolor{mybgcolor}{gray}{0.8}
\definecolor{myframecolor}{rgb}{.647,.129,.149}
\newmdenv[style=mystyle]{important}
   \numberwithin{equation}{section}
\title[On the possible temperatures for flows on an AF-algebra]{On the possible temperatures for flows on an AF-algebra}
\author{Klaus Thomsen}
        \email{matkt@imf.au.dk}
        \address{Institut for matematiske fag, Ny Munkegade, 8000 Aarhus C, Denmark}
\newcommand\xsquid[1]{\mathrel{\smash{\overset{#1}{\rightsquigarrow}}}}
\newcommand\soverline[1]{\smash{\overline{#1}}}
\DeclareMathOperator\coker{coker}
\begin{document}

\maketitle

\begin{abstract} It is shown that there is a unital simple mono-tracial AF algebra A with the property that for every compact set K of real numbers containing zero there is a flow on A for which the set of inverse temperatures is exactly K.
 \end{abstract}

\section{Introduction}

The work \cite{BEH} by Bratteli, Elliott and Herman demonstrated that the set of inverse temperatures for a one parameter group of automorphisms of a simple unital $C^*$-algebra can be an arbitrary closed set of real numbers. For general reasons the set must be closed, and their examples show that this is the only restriction. It follows also from \cite{BEH} that the simplexes of equilibrium states can vary wildly with the temperature. However, in the quantum statistical models where the theory is used the underlying $C^*$-algebra is often an AF algebra or even a UHF algebra, and it was not before the work of Kishimoto, in \cite{Ki2} and \cite{Ki3}, that one could begin to suspect that a similar wild behaviour can occur for flows on simple AF algebras. Indeed, in \cite{Ki2} Kishimoto constructed flows on simple unital AF algebras and UHF algebras with a non-trivial structure of KMS states and in \cite{Ki3} he opened up for the possibility of having flows on simple unital AF algebras for which the set of possible inverse temperatures is not the entire real line. Recall that by results of Powers and Sakai, \cite{PS}, any approximately inner flow on a unital $C^*$-algebra with a trace state must have $\beta$-KMS states for all $\beta$. In particular, any approximately inner flow on a unital AF algebra must have equilibrium states at every temperature. In addition, Powers and Sakai conjectured that any flow on a UHF algebra is approximately inner, which, if true, would imply that all flows on a UHF algebra have equilibrium states at every temperature. The AF version of the Powers-Sakai conjecture was refuted by Kishimoto in \cite{Ki3}, and the actual conjecture by Matui and Sato in \cite{MS}. The possibility of having other sets of inverse temperatures than $\mathbb R$ for flows on a simple unital AF algebra and perhaps even on a UHF algebra has been actualized by these results. It can be shown that for the examples of Kishimoto in \cite{Ki3} and Matui and Sato in \cite{MS}, the flows that are shown not to be approximately inner have no $\beta$-KMS states when $\beta \neq 0$. (See Remark \ref{11-11-20} below.) Hence, up to now, for all flows on AF algebras for which the set of possible inverse temperatures has been found, the set has either been $\mathbb R$ or $\{0\}$. The main purpose of the present paper is to show that the ideas of Matui and Sato can be combined with the approach of Bratteli, Elliott and Herman to get an example of a simple unital AF algebra $A$ such that for any compact set of real numbers containing zero there is a $2\pi$-periodic flow $\theta^K$ on $A$ with the set of inverse temperatures equal to $K$, and for any $\beta$ in $K$ the $\beta$-KMS state is unique. In fact, we will show that one of the AF algebras considered in \cite{BEH} is such an algebra. In addition we combine the results with the authors recent work on the variation with $\beta$ of the simplexes of $\beta$-KMS states for flows on UHF algebras. 

The fundamental idea behind the new examples is the same as in \cite{BEH}, \cite{Ki3} and \cite{MS}, namely to consider the dual action on well-chosen crossed products by the integers, and as in all these papers the success depends on results from the classification of simple $C^*$-algebras. Besides the classification of AF algebras, which was fundamental already in \cite{BEH}, we depend here on the recent work by Castillejos, Evington, Tikuisis, White and Winter in \cite{CETWW}.

\smallskip

\emph{Acknowledgement} The work was supported by the DFF-Research Project 2 `Automorphisms and Invariants of Operator Algebras', no. 7014-00145B.

\section{Statement of results}\label{statresults}

In this paper all $C^*$-algebras are assumed to be separable and all traces and weights on a $C^*$-algebra are required to be non-zero, densely defined and lower semi-continuous. Concerning weights and in particular KMS weights we shall use notation and results from Sections 1.1 and 1.3 in \cite{KV}. Let $A$ be a $C^*$-algebra and $\theta$ a flow on $A$. Let $\beta \in \mathbb R$. A $\beta$-KMS weight for $\theta$ is a weight $\omega$ on $A$ such that $\omega \circ \theta_t = \omega$ for all $t$, and 
\begin{equation}\label{27-10-20c}
\omega(a^*a) \ = \ \omega\left(\theta_{-\frac{i\beta}{2}}(a) \theta_{-\frac{i\beta}{2}}(a)^*\right) \ \  \ \forall a \in D(\theta_{-\frac{i\beta}{2}}) \ .
\end{equation}
In particular, a $0$-KMS weight for $\theta$ is a $\theta$-invariant trace. It was shown by Kustermans in Theorem 6.36 of \cite{Ku} that this definition agrees with the one introduced by Combes in \cite{C}.
It is because of the formulation given by \eqref{27-10-20c}, which was not available when \cite{BEH} was written, that we are able to work with KMS weights throughout the present work. A bounded $\beta$-KMS weight is called a $\beta$-KMS functional and a $\beta$-KMS state when it is of norm $1$.

The elements of the additive group $\mathbb Z\left[t,t^{-1},(1-t)^{-1}\right]$ of polynomials in $t,t^{-1}$ and $(1-t)^{-1}$ with integer coefficients will be considered as continuous functions on the open interval $]0,1[$. To simplify the notation we denote this group by $G_0$. Let $F$ be a closed non-empty subset of $[0,1]$ which neither contains $0$ nor $1$. Set
$$
G^+_F \ = \ \left\{ f \in  G_0: \ f(t) > 0 \ \forall t \in F \right\} \cup \{0\} \ .
$$
With $G_F^+$ as the semigroup of positive elements the group $G_0$ is a simple dimension group by Corollary 2.2 in \cite{BEH}. As in \cite{BEH} we denote by $A_F$ the unique AF algebra whose $K_0$-group with dimension scale is isomorphic to $(G_0,G^+_F)$, \cite{EHS}. The main results of the paper are the following.

\begin{thm}\label{08-11-20d} Let $F \subseteq [0,1]$ and $F_1 \subseteq [0,1]$ be closed subsets of $[0,1]$ such that
\begin{itemize}
\item $F \neq \emptyset$,
\item $(F \cup F_1) \cap \{0,1\} = \emptyset$, and
\item $1/2 \in (F\cup F_1)^c \cup (F \cap F_1) $.
\end{itemize} 
There is a $2\pi$-periodic flow $\theta$ on $A_F$ such that for $\beta \neq 0$ there is a $\beta$-KMS weight for $\theta$ if and only if $\frac{e^{-\beta}}{1+e^{- \beta}} \in F_1$. For each such $\beta$ the $\beta$-KMS weight is unique up to multiplication by scalars. All traces on $A_F$ are $\theta$-invariant and the cone of $0$-KMS weights for $\theta$ is affinely homeomorphic to the cone of bounded Borel measures on $F$. 
\end{thm}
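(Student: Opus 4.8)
The plan is to produce $\theta$ as the dual flow of a crossed product. Concretely, I would construct a simple unital AF algebra $B$ and an automorphism $\alpha \in \Aut(B)$ with $B \rtimes_\alpha \Z \cong A_F$, and take $\theta$ to be the dual action of $\widehat{\Z} = \T = \mathbb{R}/2\pi\Z$, which is automatically a $2\pi$-periodic flow. The point of this choice is the standard reduction of KMS theory for a dual action to scaled traces on the coefficient algebra: if $u$ denotes the implementing unitary, so that $\theta_s(u) = e^{is}u$ and $\theta_s|_B = \id$, then feeding $x = u$ into the KMS condition \eqref{27-10-20c} and using the canonical conditional expectation $E \colon B \rtimes_\alpha \Z \to B$ shows that $\omega \mapsto \omega|_B$ is a bijection, respecting scalar multiples, between $\beta$-KMS weights for $\theta$ and the (possibly unbounded, lower semicontinuous, densely defined) traces $\tau$ on $B$ satisfying the scaling relation $\tau \circ \alpha = e^{-\beta}\tau$; the inverse sends $\tau$ to $\tau \circ E$. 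Carried out inside the Kustermans--Vaes weight calculus, this reduces the whole theorem to a statement about the trace dynamics of $(B,\alpha)$: I must arrange that the $\alpha$-invariant traces are affinely the bounded Borel measures on $F$, and that for each $\lambda \neq 1$ a nonzero trace with $\tau \circ \alpha = \lambda\tau$ exists if and only if $\tfrac{\lambda}{1+\lambda} \in F_1$, and is then unique up to a scalar.

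I would build $(B,\alpha)$ from dimension-group data, using $G_0 = \Z[t,t^{-1},(1-t)^{-1}]$ and the fact that $\tfrac{t}{1-t}$ is a unit of $G_0$. Let $K_0(B) = G_0[x,x^{-1}]$ and let $\alpha_* = \phi$ be multiplication by $x\tfrac{t}{1-t}$; since this is multiplication by a unit, $1-\phi$ is injective on the domain $G_0[x,x^{-1}]$ and $\coker(1-\phi) \cong G_0$, with $x \equiv \tfrac{1-t}{t}$ in the quotient. Reading elements as functions of $(t,x)$, the factor by which $\phi$ scales the evaluation at $(t,x)$ is $x\tfrac{t}{1-t}$. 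I equip $K_0(B)$ with the order in which a nonzero element is positive precisely when it is strictly positive at every point of the compact set $Y = \{(t,\tfrac{1-t}{t}) : t \in F\} \cup \{(t,1) : t \in F_1\}$, exactly as $G^+_F$ is defined; this may leave $K_0(B)$ with infinitesimals, but it is still a simple dimension group, so by \cite{EHS} there is a simple unital AF algebra $B$ with this $K_0$ and an automorphism $\alpha$ inducing $\phi$. The invariant evaluations are those on the first family (scaling factor $1$), realising the measures on $F$; for $\lambda \neq 1$ the $\lambda$-scaled evaluations lie on $x = \lambda\tfrac{1-t}{t}$, which meets $Y$ only in the single point $(\tfrac{\lambda}{1+\lambda},1)$ of the second family, present exactly when $\tfrac{\lambda}{1+\lambda} \in F_1$, giving existence and uniqueness of scaled traces. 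Because $1-\phi$ is injective with cokernel $G_0$, Pimsner--Voiculescu gives $K_1(B \rtimes_\alpha \Z) = 0$ and $K_0(B \rtimes_\alpha \Z) = G_0$, and one checks the induced order and scale are $(G^+_F,1)$ and that the traces are the measures on $F$. Since $\alpha_*$ has infinite order, every nonzero power of $\alpha$ acts nontrivially on $K_0$ and is outer, so the crossed product is simple; being also separable, unital, nuclear, quasidiagonal, UCT, and $\mathcal{Z}$-stable by \cite{CETWW}, it is identified by classification with the AF algebra $A_F$, and $\theta$ is transported to $A_F$.

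Granting this, the assertions follow from the trace analysis of the first paragraph. For $\beta \neq 0$ the relevant factor is $\lambda = e^{-\beta} \neq 1$, so a $\beta$-KMS weight exists if and only if $\tfrac{e^{-\beta}}{1+e^{-\beta}} \in F_1$, and it is unique up to scalars because the corresponding scaled trace is. For $\beta = 0$ the KMS weights are the $\alpha$-invariant traces, equivalently (via $E$) the traces on $A_F$ itself, so all traces are $\theta$-invariant and the $0$-KMS cone is affinely homeomorphic to the cone of bounded Borel measures on $F$. The hypothesis $1/2 \in (F \cup F_1)^c \cup (F \cap F_1)$ enters exactly at the junction $t = 1/2$, the value $\lambda = 1$ at which the scaled family degenerates onto the invariant one: requiring that ``$1/2 \in F$'' and ``$1/2 \in F_1$'' have the same truth value is what makes the two prescriptions of the order agree at $(1/2,1)$, so that no spurious invariant trace is created off $F$, the quotient order is exactly $G^+_F$, and $B$ remains a dimension group there.

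The main obstacle is the simultaneous realisation of all of this in one ordered group: one must choose the positive cone of $B$ so that it reproduces the invariant family parametrised by $F$ and the scaled family parametrised by $F_1$, verify that the result is an unperforated Riesz group (so that $B$ exists and is AF), and, crucially, confirm that passing to $\coker(1-\phi)$ returns precisely $(G_0, G^+_F, 1)$ with vanishing $K_1$ --- it is here that the $1/2$-compatibility is genuinely used, and that Riesz interpolation at the junction point must be checked. The remaining ingredients (outerness of $\alpha$, the exclusion of scaled traces for $\lambda$ outside the prescribed set, and the careful bookkeeping of unbounded weights in the Kustermans--Vaes framework) are routine once the dimension group and its $K$-theory are under control, but producing an order with all the required properties at once is the crux of the argument.
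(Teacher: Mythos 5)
Your overall strategy is the paper's: you realise $A_F$ as a crossed product of an AF algebra $B$ by an automorphism whose $K_0$-data is the shift-times-$\tfrac{t}{1-t}$ action on $\oplus_{\mathbb Z}G_0$ (your $G_0[x,x^{-1}]$ with multiplication by $x\tfrac{t}{1-t}$ is the same thing written multiplicatively), with the positive cone cut out by the two families of evaluations indexed by $F$ and $F_1$; you then use the standard bijection between $\beta$-KMS weights for the dual flow and traces scaled by $e^{-\beta}$, classify the scaled positive homomorphisms on the dimension group, and identify the crossed product with $A_F$ by classification. All of that matches Lemmas 3.2, 4.2, 4.5, 4.7 and Theorem 4.10 of the paper, and your observation about where the $1/2$-compatibility enters is exactly right.

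There is, however, one genuine gap, and it is precisely the ingredient the paper imports from Matui and Sato. Your sentence ``for $\beta=0$ the KMS weights are the $\alpha$-invariant traces, equivalently (via $E$) the traces on $A_F$ itself, so all traces are $\theta$-invariant'' does not follow from anything you have set up: the restriction map is a bijection from the \emph{$\widehat{\alpha}$-invariant} traces on $B\rtimes_\alpha\mathbb Z$ onto the $\alpha$-invariant traces on $B$, but a simple crossed product of an AF algebra by $\mathbb Z$ can perfectly well carry traces that are \emph{not} invariant under the dual action and are not determined by their restriction to $B$ (this can happen even when $B$ is UHF). Since the theorem asserts that all traces on $A_F$ are $\theta$-invariant and that the $0$-KMS cone is exactly the measures on $F$, you must rule such traces out, and a generic choice of $\alpha$ within its $K_0$-class will not do it. The paper handles this by replacing $\alpha$ with $\alpha\otimes\theta\otimes\id_{\mathcal Z}$ on $B\otimes\mathcal Z\otimes\mathcal Z$, where $\theta$ is a Rohlin-property automorphism of the Jiang--Su algebra, and then running an averaging estimate to show every trace on the new crossed product kills the Fourier modes $bu^k$, $k\neq 0$ (Lemma~3.4). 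The same modification is what secures $\mathcal Z$-stability of the crossed product: your appeal to \cite{CETWW} for that is misplaced, since \cite{CETWW} gives $\mathcal Z$-stability for AF algebras (hence for $B$ and for $A_F$), not for a crossed product that you have not yet shown to be AF; without building $\mathcal Z$-stability into the construction you cannot invoke Corollary D of \cite{CETWW} to identify the crossed product with $A_F$. (A smaller point: $A_F$ is stable, so you cannot have $B$ unital with $B\rtimes_\alpha\mathbb Z\cong A_F$; the paper works with a stable $B_F$, proves the unital statement for a corner, and then derives the stable version by tensoring with $\mathbb K$.)
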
 
 
The AF algebra $A_F$ is simple and stable. The following is a unital version of Theorem \ref{08-11-20d}.

\begin{thm}\label{08-11-20e} Let $F$ and $F_1$ be as in Theorem \ref{08-11-20d} and let $p \in A_F$ be a projection which represents the constant function $1 \in G_0$. 
There is a $2\pi$-periodic flow $\theta$ on $pA_Fp$ such that for $\beta \neq 0$ there is a $\beta$-KMS state for $\theta$ if and only if $\frac{e^{-\beta}}{1+e^{- \beta}} \in F_1$. For each such $\beta$ the $\beta$-KMS state is unique. All traces on $pA_Fp$ are $\theta$-invariant and the set of $0$-KMS states for $\theta$ is affinely homeomorphic to the set of Borel probability measures on $F$. 
\end{thm}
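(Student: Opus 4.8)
The plan is to deduce Theorem \ref{08-11-20e} from Theorem \ref{08-11-20d} by passing to a full $\theta$-invariant corner of $A_F$. The first step is to arrange that the projection $p$ representing the constant function $1 \in G_0$ is invariant under the flow. Since $\theta$ is $2\pi$-periodic it is an action of the circle $\mathbb T = \mathbb R/2\pi\mathbb Z$, and I would check, from the construction underlying Theorem \ref{08-11-20d}, that the class $1 \in G_0 = K_0(A_F)$ is represented by a projection lying in the fixed point algebra $A_F^\theta$ (an AF algebra). Because $A_F$ is simple and stable, any two projections representing $1$ are Murray--von Neumann equivalent, and equivalent projections give isomorphic corners; transporting the restricted flow along such an equivalence, I may therefore assume that the given $p$ is $\theta$-invariant. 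Then $\theta$ restricts to a $2\pi$-periodic flow on the unital algebra $pA_Fp$, and this is the flow whose KMS structure I shall compute.

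The heart of the argument is a bijection, implemented by restriction, between the $\beta$-KMS weights of $(A_F,\theta)$ and the $\beta$-KMS functionals of $(pA_Fp,\theta|_{pA_Fp})$. For finiteness, note that $2\pi$-periodicity provides the faithful conditional expectation $E(x)=\frac{1}{2\pi}\int_0^{2\pi}\theta_t(x)\,dt$ onto $A_F^\theta$, and that any $\beta$-KMS weight $\omega$ is $\theta$-invariant, so $\omega\circ E=\omega$. Using the Kadison--Schwarz inequality $E(a)^*E(a)\le E(a^*a)$ one sees that $\omega|_{A_F^\theta}$ is densely defined, and by \eqref{27-10-20c} it is a trace on $A_F^\theta$, since elements of $A_F^\theta$ are fixed by $\theta_{-i\beta/2}$. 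A densely defined lower semicontinuous trace on an AF algebra is finite on every projection, so $0<\omega(p)<\infty$, the positivity following from fullness of $p$ together with $\omega\neq 0$. Hence $\omega\mapsto\omega|_{pA_Fp}$ carries $\beta$-KMS weights on $A_F$ to bounded $\beta$-KMS functionals on $pA_Fp$, which normalize to $\beta$-KMS states.

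For the equivalence of conditions I would argue both directions. If $\frac{e^{-\beta}}{1+e^{-\beta}}\in F_1$, Theorem \ref{08-11-20d} furnishes a $\beta$-KMS weight $\omega$ on $A_F$, and by the previous paragraph its normalized restriction is a $\beta$-KMS state on $pA_Fp$; uniqueness up to scalars in Theorem \ref{08-11-20d}, together with injectivity of restriction (valid because $p$ is full, so a KMS weight is determined by its values on $pA_Fp$), shows this state is the only one. Conversely, if $\frac{e^{-\beta}}{1+e^{-\beta}}\notin F_1$ and $\psi$ were a $\beta$-KMS state on $pA_Fp$, I would extend it to a $\beta$-KMS weight on $A_F$, contradicting the non-existence part of Theorem \ref{08-11-20d}; the extension uses the full $\theta$-invariant corner structure, i.e.\ the Morita-type correspondence of KMS weights between $pA_Fp$ and $A_F$. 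This surjectivity is the main obstacle: it is the one place where a genuine construction is needed rather than a mere restriction, and it is exactly where invariance of $p$ is indispensable, as it makes the flow on the corner and the correspondence compatible.

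Finally, the case $\beta=0$ and the statement about traces follow from the same corner picture. Every trace on $pA_Fp$ extends, by fullness, to a densely defined lower semicontinuous trace on $A_F$, which is $\theta$-invariant by Theorem \ref{08-11-20d}; restricting back shows that all traces on $pA_Fp$ are $\theta|_{pA_Fp}$-invariant. The affine bijection between traces on $A_F$ and on $pA_Fp$ given by restriction and extension is a weak-$*$ homeomorphism, and under the identification in Theorem \ref{08-11-20d} of the $0$-KMS cone with the bounded Borel measures on $F$, the normalization $\tau(p)=1$ selects exactly the probability measures. Hence the set of $0$-KMS states on $pA_Fp$ is affinely homeomorphic to the Borel probability measures on $F$, which completes the unital version.
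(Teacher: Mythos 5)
Your proposal inverts the paper's logical order and, in doing so, begs the question. In the paper, Theorem \ref{08-11-20e} is the one proved first: the flow is constructed as the dual action on a corner $e(B_F\rtimes_\gamma\mathbb Z)e$ of a crossed product built from a dimension group $(G,G^+)$ with $G=\oplus_{\mathbb Z}G_0$, its KMS states are computed via Lemma \ref{26-10-20}, Lemma \ref{30-10-20} and Lemma \ref{05-11-20a}, and the identification of that corner with $pA_Fp$ is the classification result Theorem \ref{28-10-20f} (resting on Corollary D of \cite{CETWW}). Theorem \ref{08-11-20d} is then \emph{deduced} from Theorem \ref{08-11-20e} by tensoring with $\mathbb K$, as the paper announces in Section \ref{statresults}. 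Your argument takes Theorem \ref{08-11-20d} as given and passes to the corner, so within the paper's framework it is circular; and since you supply no independent proof of Theorem \ref{08-11-20d}, the entire substance of the result --- the construction of $(B_F,\gamma)$, the computation showing that scaling traces exist exactly when $\frac{e^{-\beta}}{1+e^{-\beta}}\in F_1$ and are then unique, and the classification argument identifying the crossed product with $A_F$ --- is absent from your proposal.

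Even granting Theorem \ref{08-11-20d}, the step you yourself flag as ``the main obstacle'' is a genuine gap: you assert without proof that a $\beta$-KMS state on the full invariant corner $pA_Fp$ extends to a $\beta$-KMS weight on $A_F$, and that restriction is injective on KMS weights. This is exactly the content the paper imports as Theorem 2.4 of \cite{Th2} (used in Lemma \ref{05-11-20a}); it is not a formal consequence of fullness of $p$ and needs either that citation or an actual construction (e.g.\ via the GNS representation and the fact that $\overline{A_FpA_F}=A_F$). The remaining ingredients of your outline --- choosing $p$ in the fixed-point algebra (which does hold here, since $p$ corresponds to $e\in B_F$ and $B_F$ is pointwise fixed by $\widehat{\gamma}$), the finiteness of $\omega(p)$, and the normalization $\tau(p)=1$ selecting probability measures for $\beta=0$ --- are sound, but they are the easy part. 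As it stands the proposal is a reduction of Theorem \ref{08-11-20e} to Theorem \ref{08-11-20d} plus an unproven corner--ambient KMS correspondence, not a proof.
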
 

The two theorems are closely related, and we shall derive Theorem \ref{08-11-20d} from Theorem \ref{08-11-20e}. Consider a compact subset $K$ of real numbers and apply Theorem \ref{08-11-20e} with $F = \{1/2\}$ and 
$$
F_1 = \left\{ \frac{e^{- \beta}}{1+e^{-\beta}}: \ \beta \in K \right\} \ \cup \{1/2\} \ .
$$
This gives the following

\begin{cor}\label{08-11-20f} Let $p$ be a projection in $A_{\{1/2\}}$ representing the constant function $1 \in G_0$. Then $pA_{\{1/2\}}p$ is a simple unital mono-tracial AF algebra with the following property: For every compact set $K$ of real numbers there is a $2 \pi$-periodic flow $\theta^K$ on $pA_{\{1/2\}}p$ such that there is a $\beta$-KMS state for $\theta^K$ if and only if $\beta \in K \cup \{0\}$, in which case it is unique.
\end{cor}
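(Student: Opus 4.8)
The plan is to obtain Corollary~\ref{08-11-20f} as a direct specialization of Theorem~\ref{08-11-20e}, so that all the work lies in verifying that the chosen data satisfy its hypotheses and in unwinding the resulting KMS condition. Set $F = \{1/2\}$ and $F_1 = \{g(\beta) : \beta \in K\} \cup \{1/2\}$, where $g(\beta) = \frac{e^{-\beta}}{1+e^{-\beta}} = \frac{1}{1+e^{\beta}}$. First I would record the elementary analytic properties of $g$: it is a strictly decreasing homeomorphism of $\mathbb{R}$ onto the open interval $]0,1[$ with $g(0) = 1/2$. From this, $F_1 \subseteq\, ]0,1[$, so $(F \cup F_1) \cap \{0,1\} = \emptyset$; moreover, since $K$ is compact and $g$ is continuous, $g(K)$ is compact, whence $F_1$ is a closed subset of $[0,1]$ meeting neither endpoint. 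Finally $1/2 \in F \cap F_1$, so the third hypothesis $1/2 \in (F \cup F_1)^c \cup (F \cap F_1)$ holds. Thus $F$ and $F_1$ are admissible data for Theorem~\ref{08-11-20e}.

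Next I would apply Theorem~\ref{08-11-20e} to this pair. The algebra in question is $pA_{\{1/2\}}p$, which is a simple unital AF algebra: it is the corner of the simple AF algebra $A_{\{1/2\}}$ by a projection, hence a unital hereditary subalgebra, and both simplicity and the AF property pass to such a subalgebra. The theorem then furnishes a $2\pi$-periodic flow $\theta^K$ on $pA_{\{1/2\}}p$ for which, when $\beta \neq 0$, a $\beta$-KMS state exists exactly when $g(\beta) \in F_1$ and is then unique, while the $0$-KMS states correspond affinely to the Borel probability measures on $F$.

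The key remaining point is to identify, among $\beta \neq 0$, precisely those for which $g(\beta) \in F_1$, and here the injectivity of $g$ does the work. For $\beta \neq 0$ we have $g(\beta) \neq g(0) = 1/2$, so membership $g(\beta) \in F_1 = g(K) \cup \{1/2\}$ forces $g(\beta) \in g(K)$, and by injectivity of $g$ this is equivalent to $\beta \in K$. Hence for $\beta \neq 0$ a $\beta$-KMS state (necessarily unique) exists if and only if $\beta \in K$.

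It remains to treat $\beta = 0$ and to record mono-traciality. Since $F = \{1/2\}$ is a single point it carries exactly one Borel probability measure, so Theorem~\ref{08-11-20e} gives a unique $0$-KMS state. As every trace on $pA_{\{1/2\}}p$ is $\theta^K$-invariant, the trace states coincide with the $0$-KMS states, so there is exactly one trace state and $pA_{\{1/2\}}p$ is mono-tracial. Combining the two cases, $\theta^K$ admits a $\beta$-KMS state precisely when $\beta \in K \cup \{0\}$, and it is unique in each instance, which is the assertion of the corollary. There is no deep obstacle here, as the substance resides in Theorem~\ref{08-11-20e}; the only genuine subtleties are ensuring that the preimage selected by $F_1$ is exactly $K$ and nothing larger, which is guaranteed by the strict monotonicity of $g$, and that $F_1$ is closed, which is exactly where compactness of $K$ is used so that Theorem~\ref{08-11-20e} applies.
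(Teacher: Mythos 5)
Your proposal is correct and follows exactly the paper's route: the paper likewise obtains the corollary by applying Theorem \ref{08-11-20e} with $F=\{1/2\}$ and $F_1=\left\{\frac{e^{-\beta}}{1+e^{-\beta}}:\beta\in K\right\}\cup\{1/2\}$, and your verifications of the hypotheses, the injectivity argument identifying $g(\beta)\in F_1$ with $\beta\in K$ for $\beta\neq 0$, and the mono-traciality from $F$ being a singleton are precisely the details the paper leaves implicit.
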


 By tensoring with a UHF algebra and appealing to methods and results from \cite{Th2} and \cite{Th3} we obtain in Section \ref{xxx} the following corollaries. For the formulation we denote by $S^{\alpha}_\beta$ the simplex of $\beta$-KMS states for a flow $\alpha$ on a unital $C^*$-algebra, and we recall the notion of 'strong affine isomorphism': Two compact convex sets are strongly affinely isomorphic when there is an affine bijection between them which restricts to a homeomorphism between the sets of extremal points. For Bauer simplices this is the same as affine homeomorphism, but in general it is a weaker notion. See \cite{Th3}. 
 
\begin{cor}\label{27-10-20} Let $U$ be a UHF algebra and let $pA_{\{1/2\}}p$ be the algebra from Corollary \ref{08-11-20f}. Consider a compact set $K$ of real numbers and let $\mathbb I$ be a finite or countably infinite collection of intervals in $\mathbb R$ such that $I = \mathbb R$ for at least one $I \in \mathbb I$. For each $I \in \mathbb I$ choose a compact metrizable Choquet simplex $S_I$ and for $\beta \in K$ set $\mathbb I_\beta = \left\{ I \in \mathbb I: \ \beta \in I\right\}$. There is a $2\pi$-periodic flow $\theta$ on $U \otimes pA_{\{1/2\}}p$ such that
\begin{itemize}
\item for $\beta \neq 0$ there is $\beta$-KMS state for $\theta$ if and only if $\beta \in K$,
\item for each $I \in \mathbb I$ and $\beta \in (I\cap K) \backslash \{0\}$ there is a closed face $F_I$ in $S^{\theta}_\beta$ strongly affinely isomorphic to $S_I$, and
\item for each $\beta \in K \backslash \{0\}$ and each $\omega \in S^\theta_\beta$ there is a unique norm-convergent decomposition
$$
\omega = \sum_{\beta \in \mathbb I_\beta} \omega_I \ ,
$$
where $\omega_I \in F_I$. 
\end{itemize} 
\end{cor}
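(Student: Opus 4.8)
The plan is to realize $\theta$ as a tensor product of two flows: one on the UHF algebra $U$ carrying the prescribed facial structure, and the flow $\theta^K$ of Corollary \ref{08-11-20f} on $pA_{\{1/2\}}p$ carrying the prescribed set of inverse temperatures. First I would invoke the realization results of \cite{Th2} and \cite{Th3} to produce a $2\pi$-periodic flow $\gamma$ on $U$ with the following properties: $S^\gamma_\beta \neq \emptyset$ for every $\beta \in \mathbb R$ (this is where the hypothesis that $I = \mathbb R$ for some $I \in \mathbb I$ enters), and for each $\beta$ the simplex $S^\gamma_\beta$ contains, for every $I \in \mathbb I_\beta$, a closed face strongly affinely isomorphic to $S_I$, in such a way that every $\phi \in S^\gamma_\beta$ admits a unique norm-convergent decomposition $\phi = \sum_{I \in \mathbb I_\beta}\phi_I$ with $\phi_I$ in the corresponding face, and these faces account for all of $S^\gamma_\beta$. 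I would then set $\theta = \gamma \otimes \theta^K$ on $U \otimes pA_{\{1/2\}}p$, which is again $2\pi$-periodic.

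The heart of the matter is to compute the KMS simplexes of the product flow. Fix $\beta \neq 0$ and let $\omega$ be a $\beta$-KMS state for $\theta$. Restricting the identity \eqref{27-10-20c} to the flow-invariant subalgebras $U \otimes 1$ and $1 \otimes pA_{\{1/2\}}p$, and using that on analytic elements $(\gamma \otimes \theta^K)_{-i\beta/2}$ acts as $\gamma_{-i\beta/2}\otimes \id$ and $\id \otimes \theta^K_{-i\beta/2}$ respectively, I would show (after a standard density/closedness step taking analytic elements as a core) that $\omega|_{U \otimes 1}$ is a $\beta$-KMS state for $\gamma$ and $\omega|_{1 \otimes pA_{\{1/2\}}p}$ is a $\beta$-KMS state for $\theta^K$. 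By Corollary \ref{08-11-20f} the latter restriction forces $\beta \in K$, and when $\beta \in K$ it equals the unique $\beta$-KMS state $\psi_\beta$ of $\theta^K$. Thus for $\beta \neq 0$ the flow $\theta$ has a $\beta$-KMS state only if $\beta \in K$, and it remains to show that for $\beta \in K \setminus \{0\}$ the map $\phi \mapsto \phi \otimes \psi_\beta$ is an affine homeomorphism of $S^\gamma_\beta$ onto $S^\theta_\beta$.

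This map is plainly affine, injective and continuous, so the point is surjectivity: that every $\beta$-KMS state $\omega$ of $\theta$ factorizes as $\omega = (\omega|_{U\otimes 1}) \otimes \psi_\beta$. Here I would use that, being the unique $\beta$-KMS state of $\theta^K$, $\psi_\beta$ is extremal and hence a factor state. In the GNS representation of $\omega$, write $M = \pi_\omega(U \otimes pA_{\{1/2\}}p)''$ with its cyclic and separating vector, and put $N_1 = \pi_\omega(U\otimes 1)''$ and $N_2 = \pi_\omega(1\otimes pA_{\{1/2\}}p)''$. Since the modular automorphism group of $\omega$ is, up to rescaling of the parameter, the extension of $\gamma \otimes \theta^K$, both $N_1$ and $N_2$ are globally invariant under it, so Takesaki's theorem supplies an $\omega$-preserving normal conditional expectation $E\colon M \to N_2$. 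As $\psi_\beta$ is a factor state, $N_2$ is a factor; because $N_1$ and $N_2$ commute, the module property of $E$ gives $E(x) \in N_2 \cap N_2' = \mathbb C 1$ for $x \in N_1$, whence $E(x) = \omega(x)1$ and $\omega(xy) = \omega(x)\omega(y)$ for $x \in N_1$, $y \in N_2$. This is exactly the asserted factorization, so for $\beta \in K\setminus\{0\}$ we obtain $S^\theta_\beta \cong S^\gamma_\beta$, and in particular a $\beta$-KMS state exists precisely when $\beta \in K$.

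Finally I would transport the facial structure along this homeomorphism. An affine homeomorphism of Choquet simplexes carries closed faces to closed faces and restricts to a homeomorphism of extreme boundaries, so the faces of $S^\gamma_\beta$ strongly affinely isomorphic to the $S_I$ are carried to closed faces $F_I \subseteq S^\theta_\beta$ strongly affinely isomorphic to $S_I$, and the decomposition $\phi = \sum_{I\in\mathbb I_\beta}\phi_I$ is carried to the required unique norm-convergent decomposition of $\omega = \phi \otimes \psi_\beta$. The main obstacle I anticipate is the factorization step: one must verify carefully that the restrictions to the two tensor legs are genuinely KMS states and not merely invariant ones, and that the extremality of $\psi_\beta$ is used correctly to force $N_2$ to be a factor and hence a true tensor splitting of $\omega$. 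A secondary point to settle at the outset is that the flow $\gamma$ furnished by \cite{Th2} and \cite{Th3} can indeed be taken $2\pi$-periodic, on the given UHF algebra $U$, and with $S^\gamma_\beta$ nonempty for every $\beta$; once these inputs are in place, the rest is the bookkeeping above.
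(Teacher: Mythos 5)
Your construction is the same as the paper's: take the generalized gauge action $\alpha$ on $U$ supplied by Corollary 5.7 of \cite{Th3} (arranged to be $2\pi$-periodic by making the potential integer-valued), tensor it with the flow $\theta^K$ of Corollary \ref{08-11-20f}, read off the ``only if'' direction from the restriction of a KMS state to the second tensor leg, and reduce everything to showing that $\omega \mapsto \omega\otimes\omega_\beta$ maps $S^\alpha_\beta$ \emph{onto} $S^{\theta}_\beta$. Where you genuinely diverge is in that surjectivity step. The paper argues concretely with the gauge-action structure: for fixed $b\geq 0$ the functional $x\mapsto\psi(x\otimes b)$ satisfies the $\beta$-KMS identity for $\alpha$, hence kills $E^n_{\mu,\mu'}\otimes b$ whenever $F(\mu)\neq F(\mu')$, so $\psi$ factors through $Q\otimes\id$ with $Q:U\to U^\alpha$ the conditional expectation onto the fixed-point algebra; then for fixed $a\geq 0$ the functional $b\mapsto\psi(Q(a)\otimes b)$ is $\beta$-KMS for $\theta^K$ and uniqueness of $\omega_\beta$ forces the product form. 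You instead run the abstract von Neumann algebraic splitting: $\omega_\beta$ is extremal, hence a factor state; in the GNS representation of $\psi$ the algebra $N_2=\pi_\psi(1\otimes pA_{\{1/2\}}p)''$ is a factor (this needs the small observation that the compression of $N_2$ to $\overline{N_2\Omega_\psi}$ is injective because $\Omega_\psi$ is separating, and identifies $N_2$ with $\pi_{\omega_\beta}(pA_{\{1/2\}}p)''$); Takesaki's theorem gives an $\omega$-preserving expectation onto $N_2$, and the relative commutant argument yields $\psi(xy)=\psi(x)\psi(y)$ for $x\in N_1$, $y\in N_2$. Both routes are valid. Yours is more general --- it never uses that $\alpha$ is a generalized gauge action, only that the $\beta$-KMS state of the second factor is unique --- at the cost of invoking modular theory and the extremal-implies-factor theorem; the paper's is elementary and self-contained but tied to the specific form of $\alpha$. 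The remaining steps (transporting the faces $F_I$ and the norm-convergent decomposition along the affine homeomorphism) coincide with the paper's, and your flagged loose end about $2\pi$-periodicity of $\alpha$ is resolved exactly as you suspect, by choosing the numbers $t_k$ in Lemma 5.3 of \cite{Th3} to be natural numbers.
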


 \begin{cor}\label{27-10-20a} Let $U$ be a UHF algebra and let $pA_{\{1/2\}}p$ be the algebra from Corollary \ref{08-11-20f}. Let $K$ be a compact set of real numbers. There is a $2\pi$-periodic flow $\theta$ on $U \otimes pA_{\{1/2\}}p$ such that $S^\theta_\beta = \emptyset$ if and only if $\beta \notin K \cup \{0\}$, and for $\beta, \beta' \in K \cup \{0\}$ the simplexes $S^{\theta}_\beta$ and $S^\theta_{\beta'}$ are not strongly affinely isomorphic unless $\beta = \beta'$. 
 \end{cor}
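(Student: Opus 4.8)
The plan is to obtain $\theta$ directly from \refcor{27-10-20}, choosing the interval family $\mathbb I$ and the simplices $S_I$ so that the resulting $\beta$-KMS simplices separate the points of $K \cup \{0\}$. The emptiness statement is immediate: by the first bullet of \refcor{27-10-20} a $\beta$-KMS state with $\beta \neq 0$ exists precisely when $\beta \in K$, while a $0$-KMS state always exists (it is the $\theta$-invariant trace), so $S^\theta_\beta = \emptyset$ exactly for $\beta \notin K \cup \{0\}$. For the separation I would exploit that, by the decomposition in \refcor{27-10-20}, for $\beta \in K\setminus\{0\}$ the simplex $S^\theta_\beta$ is the join of the closed faces $F_I \cong S_I$ with $I \in \mathbb I_\beta := \{I \in \mathbb I : \beta \in I\}$, so that its extreme boundary is the disjoint union $\bigsqcup_{I \in \mathbb I_\beta}\partial_e S_I$. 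Since a strong affine isomorphism restricts to a homeomorphism of extreme boundaries, it then suffices to arrange two things: (i) that $\mathbb I_\beta$ determines $\beta$, and (ii) that the homeomorphism type of $\bigsqcup_{I \in \mathbb I_\beta}\partial_e S_I$ determines $\mathbb I_\beta$.

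For (i) I would take $\mathbb I = \{\mathbb R\}\cup\{(a,b): a,b\in\mathbb Q,\ a<b\}$, which is countable and contains $\mathbb R$ as \refcor{27-10-20} demands. Given distinct reals, say $\beta<\beta'$, picking rationals $a<\beta<b<\beta'$ produces $(a,b)\in\mathbb I_\beta\setminus\mathbb I_{\beta'}$, so $\mathbb I_\beta\neq\mathbb I_{\beta'}$ for all $\beta\neq\beta'$. For (ii) I would enumerate $\mathbb I$ as $I_1=\mathbb R, I_2, I_3,\dots$ and attach to $I_n$ the Bauer simplex $S_{I_n}$ of Borel probability measures on the sphere $\mathbb S^{n}$; these are compact metrizable Choquet simplices whose extreme boundaries $\partial_e S_{I_n}\cong\mathbb S^{n}$ are connected, compact, and pairwise non-homeomorphic (distinct dimensions).

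With these choices, for $\beta,\beta'\in K\setminus\{0\}$ a strong affine isomorphism $S^\theta_\beta\cong S^\theta_{\beta'}$ yields a homeomorphism $\bigsqcup_{I\in\mathbb I_\beta}\mathbb S^{?}\cong\bigsqcup_{I\in\mathbb I_{\beta'}}\mathbb S^{?}$ of extreme boundaries. Reading off the connected components on each side, and using that spheres of different dimension are non-homeomorphic, this forces the two families of spheres to agree, hence $\mathbb I_\beta=\mathbb I_{\beta'}$ and, by (i), $\beta=\beta'$. It remains to separate $0$ from each $\beta\in K\setminus\{0\}$: here $S^\theta_0$ is the one-point simplex of the unique ($\theta$-invariant) trace of $U\otimes pA_{\{1/2\}}p$, whereas for $\beta\neq0$ the boundary $\bigsqcup_{I\in\mathbb I_\beta}\partial_e S_I$ contains the circle $\mathbb S^{1}$ attached to $\mathbb R\in\mathbb I_\beta$ and is therefore not a point. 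This would give all three assertions.

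The step I expect to be the crux is (ii), specifically the claim that the connected components of $\partial_e S^\theta_\beta$ are precisely the spheres $\partial_e F_I\cong\mathbb S^{n}$. This requires the extreme boundary to be the \emph{topological} disjoint union of the $\partial_e F_I$, i.e.\ that no net of extreme states concentrated on one face $F_I$ can weak${}^*$-converge to an extreme state of another face $F_J$. I would extract this from the explicit face structure behind \refcor{27-10-20}: the uniqueness and norm-convergence of the decomposition $\omega=\sum_{I\in\mathbb I_\beta}\omega_I$ should show that states supported on distinct faces stay uniformly apart, so that each $\partial_e F_I$ is relatively clopen and hence a union of connected components. Once this topological identification is secured, the remaining bookkeeping above is elementary.
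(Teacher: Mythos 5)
Your overall strategy is the one the paper intends: the paper's entire proof of this corollary consists of the remark that one should apply Corollary \ref{27-10-20} with $\mathbb I$ the collection of all bounded intervals with rational endpoints (together with $\mathbb R$) and with ``appropriately'' chosen simplexes $S_I$, deferring all details to Section 6 of \cite{Th3}. Your choice of $\mathbb I$ is exactly this, your reduction of the problem to showing that the homeomorphism type of the extreme boundary of $S^\theta_\beta$ determines $\mathbb I_\beta$ is the right reduction, and your choice of $S_I$ as Bauer simplexes over spheres of distinct dimensions is a reasonable instance of ``appropriate''. The emptiness statement and the separation of $\beta=0$ from $\beta\neq 0$ are fine.

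There is, however, a genuine gap at the step you yourself identify as the crux, and the justification you sketch for it does not work. From the two properties listed in Corollary \ref{27-10-20} --- that the $F_I$, $I\in\mathbb I_\beta$, are closed faces and that every $\omega\in S^\theta_\beta$ has a unique norm-convergent decomposition $\omega=\sum_I\omega_I$ with $\omega_I$ in the cone over $F_I$ --- one can conclude that the extreme boundary of $S^\theta_\beta$ is the \emph{set-theoretic} disjoint union of the extreme boundaries of the $F_I$, but not that it is the \emph{topological} disjoint union. Your proposed argument is that states supported on distinct faces are ``uniformly apart''; this is true in norm, but the relevant topology on the extreme boundary is the weak$^*$ topology, and norm separation gives no weak$^*$ separation. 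A concrete counterexample to the implication you need: let $K$ be the Bauer simplex of Borel probability measures on the convergent sequence $X=\{0\}\cup\{1/n:n\in\mathbb N\}$ and let the faces be the singletons $\{\delta_x\}$, $x\in X$. Every measure has a unique norm-convergent ($\ell^1$) decomposition over these faces, any two distinct point masses are at norm distance $2$, and yet $\{\delta_0\}$ is not relatively open in the extreme boundary. So unique norm-convergent decomposition into closed faces does not imply that each $\partial F_I$ is relatively clopen, and in particular does not by itself identify the connected components of the extreme boundary with the spheres $\partial F_I$ (a connected component could a priori be a countably infinite union of the compact pieces, since Sierpi\'nski's theorem on partitions of continua requires compactness of the component). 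To close the gap one has to use the concrete realization of $S^\alpha_\beta$ coming from the construction in \cite{Th3} --- where the extremal $\beta$-KMS states are parametrized explicitly and the weak$^*$ topology on them is identified --- rather than only the abstract face-decomposition recorded in Corollary \ref{27-10-20}. This is precisely the content the paper imports by citing Section 6 of \cite{Th3}, so your proof is incomplete exactly where the paper outsources the work.
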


The algebra $pA_{\{1/2\}}p$ is not UHF and we raise therefore the following question; a variation of Question 6.2 in \cite{Th3}.

\begin{q}\label{q} Does Corollary \ref{08-11-20f} hold when $pA_{\{1/2\}}p$ is replaced by a UHF algebra, e.g. the CAR algebra ? 
\end{q}

In relation to Question \ref{q} it is intriguing to observe that the range of the trace on the $K_0$-group for the $C^*$-algebra $pA_{\{1/2\}}p$ is the same as for the CAR algebra. But the rich supply of flows on $ pA_{\{\frac{1}{2}\}}p$ which we construct here depends very much on the presence of the large subgroup of $K_0(pA_Fp)$ which is annihilated by the trace; a subgroup sometimes called the group of infinitesimals. Still, it is beginning to seem plausible that it is due to lack of imagination that we can not answer Question \ref{q} positively.

 \section{Preparatory lemmas}

 The first lemma can be considered as an updated version of a part of the discussion in Remark 3.3 of \cite{BEH}.

 \begin{lemma}\label{26-10-20} Let $B$ be a $C^*$-algebra and $\gamma \in \Aut(B)$ an automorphism of $B$. Let $\widehat{\gamma}$ be the dual action on $B \rtimes_{\gamma} \mathbb Z$ considered as a $2 \pi$-periodic flow. For $\beta \in \mathbb R$ the restriction map $\omega \mapsto \omega|_B$ is a bijection from the $\beta$-KMS weights for $\widehat{\gamma}$ onto the traces $\tau$ on $B$ with the property that $\tau \circ \gamma = e^{-\beta} \tau$. The inverse is the map $\tau \mapsto \tau \circ P$, where $P:  B \rtimes_{\gamma} \mathbb Z \to B$ is the canonical conditional expectation.
 \end{lemma}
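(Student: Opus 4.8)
The plan is to work with the canonical implementing unitary $u$ (in the multiplier algebra of $B \rtimes_\gamma \mathbb Z$) satisfying $u b u^* = \gamma(b)$, together with the dual flow $\widehat{\gamma}_t(b) = b$ and $\widehat{\gamma}_t(u) = e^{it} u$ for $b \in B$. On the algebraic crossed product of finite sums $\sum_n u^n b_n$, which consists of entire analytic elements for $\widehat{\gamma}$, the analytic continuation is $\widehat{\gamma}_{-i\beta/2}(u^n b) = e^{n\beta/2} u^n b$, and the conditional expectation acts by $P(u^n b) = \delta_{n,0}\, b$. Everything reduces to elementary computations with these formulas, once the weight-theoretic bookkeeping is in place.

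First I would treat the direction from weights to traces. Given a $\beta$-KMS weight $\omega$ for $\widehat{\gamma}$, applying the KMS identity \eqref{27-10-20c} to $a \in B$ — where $\widehat{\gamma}_{-i\beta/2}(a) = a$ — yields $\omega(a^*a) = \omega(aa^*)$, so $\tau := \omega|_B$ is a trace. To get the scaling relation I would apply \eqref{27-10-20c} to $a = b^{1/2} u$ for $b \in B^+$: the left side is $\omega(u^* b u) = \tau(\gamma^{-1}(b))$, while, because $\widehat{\gamma}_{-i\beta/2}(b^{1/2}u) = e^{\beta/2} b^{1/2} u$, the right side is $e^{\beta}\tau(b)$; hence $\tau \circ \gamma = e^{-\beta}\tau$. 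Finally, $\widehat{\gamma}$-invariance forces $\omega(u^n b) = e^{int}\omega(u^n b)$ for all $t$, so $\omega$ vanishes on the spectral subspaces of non-zero degree; this gives $\omega = \omega \circ P = \tau \circ P$, which both identifies the inverse map and shows injectivity.

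Conversely, given a trace $\tau$ with $\tau \circ \gamma = e^{-\beta}\tau$, I would set $\omega = \tau \circ P$ and verify the KMS axioms. Invariance is immediate from $P \circ \widehat{\gamma}_t = P$. For the KMS identity it suffices, by a core argument, to test on $a = \sum_n u^n b_n$: one computes $P(a^*a) = \sum_n b_n^* b_n$ and, using $\widehat{\gamma}_{-i\beta/2}(a) = \sum_n e^{n\beta/2} u^n b_n$, that $P(\widehat{\gamma}_{-i\beta/2}(a)\widehat{\gamma}_{-i\beta/2}(a)^*) = \sum_n e^{n\beta}\gamma^n(b_n b_n^*)$. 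Applying $\tau$ and invoking $\tau\circ\gamma^n = e^{-n\beta}\tau$ together with the trace property collapses the factors $e^{n\beta}$ and turns $b_n b_n^*$ into $b_n^* b_n$, so both sides equal $\sum_n \tau(b_n^* b_n)$, proving \eqref{27-10-20c} on the core.

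The main obstacle is not the algebra but the functional-analytic packaging for \emph{unbounded} weights: one must check that $\tau \circ P$ is genuinely densely defined, lower semicontinuous and non-zero, that the algebraic crossed product is a core for $\widehat{\gamma}_{-i\beta/2}$ so that testing \eqref{27-10-20c} there is sufficient, and that the restriction $\omega|_B$ of a KMS weight is a bona fide (non-zero) trace, with the identity $\omega = \omega\circ P$ valid at the level of weights and not merely on the smooth subalgebra. These points I would handle using the weight machinery and the equivalence of KMS conditions recorded in \cite{KV}, \cite{Ku} and \cite{C}.
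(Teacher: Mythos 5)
Your algebraic skeleton is the same as the paper's: compute on spectral subspaces, use $a\in B$ to get the trace property, use $u\sqrt{a}$ (equivalently your $\sqrt{b}\,u$) to get the scaling $\tau\circ\gamma=e^{-\beta}\tau$, and observe that $\widehat{\gamma}$-invariance kills the nonzero-degree components so that $\omega$ and $\tau\circ P$ should agree. The computation of $P(a^*a)$ and $P(\widehat{\gamma}_{-i\beta/2}(a)\widehat{\gamma}_{-i\beta/2}(a)^*)$ on finite sums is correct. The problem is that everything you defer to ``weight-theoretic bookkeeping'' is precisely the content of the paper's proof, and the references you invoke do not supply it off the shelf.

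Concretely, three steps are asserted rather than proved, and none of them is routine for unbounded weights. (i) You claim that verifying \eqref{27-10-20c} on the algebraic crossed product suffices ``by a core argument.'' The condition \eqref{27-10-20c} is required for \emph{all} $x\in D(\widehat{\gamma}_{-i\beta/2})$, and passing from a dense $\widehat{\gamma}$-invariant subalgebra of analytic elements to the full domain is not a quotable lemma in \cite{KV} or \cite{Ku}; the paper instead proves the two inequalities $\tau\circ P(\widehat{\gamma}_{-i\beta/2}(x)\widehat{\gamma}_{-i\beta/2}(x)^*)\le\tau\circ P(x^*x)$ and its reverse directly, via an approximate unit $(p_k)$ from the Pedersen ideal with $p_k^2\le p_{k+1}^2$, a double limit in $k,l$, and lower semi-continuity of $\tau$; this occupies most of the proof. (ii) You write $\omega(u^nb)=e^{int}\omega(u^nb)$, but $u^nb$ is not positive and need not lie in the domain of the linear extension of the weight $\omega$, so this expression is not a priori defined; the paper works with the \emph{bounded} functionals $x\mapsto\omega(p_kxp_k)$ (finite because $\omega|_B$ is densely defined and $p_k\in K(B)$), for which the spectral argument is legitimate. (iii) Even granting that these compressions vanish on nonzero degrees, concluding $\omega=\omega\circ P$ for an unbounded weight requires both directions: $\omega\circ P\le\omega$ comes from Riemann-sum approximation of $P$ together with $\widehat{\gamma}$-invariance and lower semi-continuity, while $\omega\le\omega\circ P$ uses the trace property of $\omega|_B$ to get $\lim_k\omega(p_kap_k)=\omega(P(a))$ and then lower semi-continuity again. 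You should also note that dense definedness of $\omega|_B$ (needed for it to be a trace at all) is itself a consequence of $\omega\circ P\le\omega$, so the logical order matters. As it stands the proposal is a correct outline of the easy algebra plus an accurate list of the hard points, but the hard points are left unproved.
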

 \begin{proof} Let $p_1 \leq p_2 \leq \cdots$ be an approximate unit in $B$ from the Pedersen ideal $K(B)$ of $B$ with the additional property that $p_k^2 \leq p^2_{k+1}$ for all $k$, cf. \cite{Pe}. Let $u$ be the canonical unitary multiplier of $B \rtimes_\gamma \mathbb Z$ such that $ubu^* = \gamma(b)$ for $b \in B$, and let $\tau$ be a trace on $B$ with the property that $\tau \circ \gamma = e^{-\beta} \tau$. For $a,b \in B, \ n,m \in \mathbb Z$ we have that
 $$
 P(p_kau^np_k^2bu^mp_k) = P(p_kbu^m p_k^2\widehat{\gamma}_{i\beta}(au^n)p_k)  = 0
 $$
 unless $m = -n$. Set $a' = p_ka\gamma^n(p_k), \ b' = p_kb\gamma^{-n}(p_k)$. Then $a',b' \in K(B)$, and by using Proposition 5.5.2 in \cite{Pe} we find that
 \begin{align*}
 & \tau\circ P(  p_kbu^{-n}p_k\widehat{\gamma}_{i \beta} (p_kau^np_k))  = e^{-n\beta}\tau(p_kbu^{-n}p_k^2au^np_k) \\
&  =e^{-n \beta} \tau( b'u^{-n} a'u^n) =\tau(\gamma^n(b')a') \\
& =  \tau(a'\gamma^n(b')) = \tau \circ P(p_kau^n p_k^2bu^{-n}p_k) \ ,  
  \end{align*} 
showing that $\tau \circ P$ is a $\beta$-KMS functional on $p_k(B \rtimes_\gamma \mathbb Z)p_k$ for the restriction of $\widehat{\gamma}$. We shall use repeatedly that when $b \geq 0$ in $B$ we have that
\begin{equation}\label{10-11-20a}
\lim_{k \to \infty} \tau(p_kbp_k) = \tau(b) \ ,
\end{equation} 
which follows from the lower semi-continuity of $\tau$ since $\tau(p_kbp_k) = \tau(\sqrt{b}p_k^2\sqrt{b})$ and $\sqrt{b}p_k^2\sqrt{b}$ increases to $b$ as $k \to \infty$. Let $x \in D(\widehat{\gamma}_{-\frac{i\beta}{2}})$. Then
\begin{align*}
& \tau\circ P\left(\widehat{\gamma}_{- \frac{i\beta}{2}}(x) \widehat{\gamma}_{- \frac{i\beta}{2}}(x)^*\right) = \lim_{k \to \infty} \tau\circ P\left(\widehat{\gamma}_{- \frac{i\beta}{2}}(x) p_k^2 \widehat{\gamma}_{- \frac{i\beta}{2}}(x)^*\right)\\
& = \lim_{k \to \infty}\lim_{l \to \infty} \tau \left(p_lP\left(\widehat{\gamma}_{- \frac{i\beta}{2}}(x) p_k^2 \widehat{\gamma}_{- \frac{i\beta}{2}}(x)^*\right)p_l\right) \ . 
\end{align*}
We have shown above that $\tau \circ P$ is a $\beta$-KMS functional on $p_l(B \rtimes_\gamma \mathbb Z)p_l$ and when $l \geq k$ this gives 
\begin{align*}
&\tau\left(p_l P\left(\widehat{\gamma}_{- \frac{i\beta}{2}}(x) p_k^2 \widehat{\gamma}_{- \frac{i\beta}{2}}(x)^*\right)p_l\right)  = \tau\circ P\left(\widehat{\gamma}_{- \frac{i\beta}{2}}(p_lxp_k)  \widehat{\gamma}_{- \frac{i\beta}{2}}(p_lxp_k)^*\right) \\
& = \ \tau \circ P((p_lxp_k)^*p_lxp_k) = \tau \circ P(p_kx^*p_l^2xp_k)  \leq  \tau\left(p_kP(x^*x)p_k\right) \ .
\end{align*}
By using \eqref{10-11-20a} we conclude that
$$
\tau \circ P\left(\widehat{\gamma}_{- \frac{i\beta}{2}}(x) \widehat{\gamma}_{- \frac{i\beta}{2}}(x)^*\right) \ \leq \ \tau \circ P(x^*x) \ .
$$
Similarly,
\begin{align*}
& \tau \circ P(x^*x) = \lim_{k \to \infty} \tau \circ P(x^*p_k^2x) = \lim_{k \to \infty} \lim_{l \to \infty} \tau \circ P(p_lx^*p_k^2 x p_l) \ .
\end{align*}
When $l \geq k$,
\begin{align*}
&\tau \circ P(p_lx^*p_k^2 xp_l) = \tau \circ P( \widehat{\gamma}_{-\frac{i\beta}{2}}(p_k xp_l) \widehat{\gamma}_{-\frac{i\beta}{2}}(p_kxp_l)^*) \\
& = \tau \circ P(p_k \widehat{\gamma}_{-\frac{i\beta}{2}}( x)p_l^2 \widehat{\gamma}_{-\frac{i\beta}{2}}(x)^*p_k \  \leq \tau\left(p_kP( \widehat{\gamma}_{-\frac{i\beta}{2}}( x) \widehat{\gamma}_{-\frac{i\beta}{2}}(x)^*)p_k\right) ,
\end{align*}
and we find therefore that
\begin{align*}
&\tau \circ P(x^*x) \leq   \lim_{k \to \infty}  \tau \left(p_kP( \widehat{\gamma}_{-\frac{i\beta}{2}}( x) \widehat{\gamma}_{-\frac{i\beta}{2}}(x)^*)p_k\right) =  \tau \circ P( \widehat{\gamma}_{-\frac{i\beta}{2}}( x) \widehat{\gamma}_{-\frac{i\beta}{2}}(x)^*) \ .
\end{align*}
We conclude therefore first that $\tau \circ P(x^*x) = \tau \circ P( \widehat{\gamma}_{-\frac{i\beta}{2}}( x) \widehat{\gamma}_{-\frac{i\beta}{2}}(x)^*)$, and then that $\tau \circ P$ is a $\beta$-KMS weight.

Let $\omega$ be a $\beta$-KMS weight for $\widehat{\gamma}$. Then $\omega(b^*b) = \omega(bb^*)$ for all $b \in B$ because $\widehat{\gamma}_{-i\frac{\beta}{2}}(b) = b$. Using Riemann sum approximations to the integral
$$
P(a) = (2\pi)^{-1} \int_0^{2\pi} \widehat{\gamma}_t(a) \ \mathrm d t \ ,
$$
it follows from the $\widehat{\gamma}$-invariance and lower semi-continuity of $\omega$ that
\begin{equation}\label{26-10-20b}
\omega \circ P(a) \leq \omega(a)
\end{equation}
 when $ a \geq 0$ in $B\rtimes_\gamma \mathbb Z$. In particular, $\omega|_B$ is densely defined since $\omega$ is, and we conclude that $\tau = \omega|_B$ is a trace on $B$.  Let $a \geq 0$. Then 
$$
\omega \circ \gamma(a) = \omega(uau^*) = e^{-\beta}\omega\left( \widehat{\gamma}_{-\frac{i\beta}{2}}(u\sqrt{a}) \widehat{\gamma}_{-\frac{i\beta}{2}}(u\sqrt{a})^*\right) = e^{-\beta} \omega(a) \ .
$$
It follows that $\tau \circ \gamma = e^{-\beta}\tau$. It remains now only to show that $\omega = \omega \circ P$. Note that because $\omega \circ \widehat{\gamma}_t = \omega$ we find for all $b\in B, \ n \in \mathbb Z$, that
$$
\omega(p_kbu^np_k) = \begin{cases} 0, & \ n \neq 0 \\\omega(p_kbp_k) , & \ n = 0 \end{cases} \ = \ \omega(p_kP(bu^n)p_k) \ ,
$$
 implying that $\omega(p_k \ \cdot \ p_k) = \omega(p_kP( \ \cdot \ )p_k)$. Let $a \in B \rtimes_\gamma \mathbb Z$, $ a \geq 0$. Since $\omega|_B$ is a trace we can use \eqref{10-11-20a} to conclude that
$$
\lim_{k \to \infty} \omega(p_kap_k) = \lim_{k \to \infty} \omega(p_kP(a)p_k) =  \ \omega(P(a)) \ .
$$
Since $\lim_{k \to \infty} p_kap_k = a$ the lower semi-continuity of $\omega$ implies now that $\omega(a) \leq \omega(P(a))$. Combined with \eqref{26-10-20b} this yields the desired conclusion that $\omega \circ P = \omega$.

 \end{proof}

 \begin{remark}\label{11-11-20} Assume that $B$ has a trace which is unique up to multiplication by scalars and $\gamma$-invariant. It follows then from Lemma \ref{26-10-20} that the dual action will have no $\beta$-KMS weights for $\beta \neq 0$, and if in addition $B \rtimes_{\gamma} \mathbb Z$ is simple the restriction of the dual action to any corner $e(B \rtimes_\gamma \mathbb Z)e$ given by a $\widehat{\gamma}$-invariant non-zero projection $e$ will have no $\beta$-KMS states for $\beta \neq 0$ by Theorem 2.4 in \cite{Th1}. This observation applies to the flows that were shown not to be approximately inner in \cite{Ki3} and \cite{MS}.
\end{remark} 
 
Taking $\beta =0$ in Lemma \ref{26-10-20} we get a special case which we shall need. It is probably known.
 
  \begin{cor}\label{28-10-20a} Let $B$ be a $C^*$-algebra and $\gamma \in \Aut(B)$ an automorphism of $B$.  The restriction map $\omega \mapsto \omega|_B$ is a bijection from the $\widehat{\gamma}$-invariant traces $\omega$ on $B \rtimes_\gamma \mathbb Z$ onto the $\gamma$-invariant traces on $B$. The inverse is the map $\tau \mapsto \tau \circ P$, where $P:  B \rtimes_{\gamma} \mathbb Z \to B$ is the canonical conditional expectation.
\end{cor}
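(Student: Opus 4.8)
The plan is to obtain the statement by specialising \reflemma{26-10-20} to $\beta = 0$, so that the whole argument reduces to identifying what the two sides of that lemma become at $\beta = 0$. On the crossed product, at $\beta = 0$ the automorphism $\widehat{\gamma}_{-i\beta/2}$ collapses to $\widehat{\gamma}_0 = \id$, whose domain is all of $B \rtimes_\gamma \mathbb Z$, so the defining KMS identity $\omega(a^*a) = \omega(\widehat{\gamma}_{-i\beta/2}(a)\widehat{\gamma}_{-i\beta/2}(a)^*)$ becomes simply $\omega(a^*a) = \omega(aa^*)$ for all $a$. Together with the invariance $\omega \circ \widehat{\gamma}_t = \omega$, this says exactly that a $0$-KMS weight for $\widehat{\gamma}$ is a $\widehat{\gamma}$-invariant trace, as already noted after \eqref{27-10-20c}. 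On $B$, the condition $\tau \circ \gamma = e^{-\beta}\tau$ appearing in \reflemma{26-10-20} reads $\tau \circ \gamma = \tau$ at $\beta = 0$ because $e^{0} = 1$, so it singles out precisely the $\gamma$-invariant traces.

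With these two identifications, \reflemma{26-10-20} applied with $\beta = 0$ asserts exactly that $\omega \mapsto \omega|_B$ is a bijection from the $\widehat{\gamma}$-invariant traces on $B \rtimes_\gamma \mathbb Z$ onto the $\gamma$-invariant traces on $B$, with inverse $\tau \mapsto \tau \circ P$. This is the content of the corollary, and no further work is needed.

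There is no genuine obstacle here: all of the analytic content—the approximate unit drawn from the Pedersen ideal, the KMS estimates, and the verification that $\omega = \omega \circ P$—has already been carried out in the proof of \reflemma{26-10-20}. The only point deserving a moment's care is the reduction of the KMS identity to the plain trace property, and this is immediate once one observes that $\widehat{\gamma}_{-i\beta/2}$ is the identity at $\beta = 0$.
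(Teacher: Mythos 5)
Your proposal is correct and is exactly the paper's argument: the paper derives this corollary by the single remark ``Taking $\beta=0$ in Lemma \ref{26-10-20} we get a special case,'' relying on the fact that Lemma \ref{26-10-20} is stated for all $\beta\in\mathbb R$ and that a $0$-KMS weight is by definition an invariant trace (as noted after \eqref{27-10-20c}). Your explicit identification of the two sides at $\beta=0$ is just a slightly more verbose rendering of the same reduction.
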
  
  
We remark that in general $B \rtimes_\gamma \mathbb Z$ can have traces that are not invariant under the dual action $\widehat{\gamma}$ and hence are not determined by their restriction to $B$; also in cases where $B$ is UHF and $B \rtimes_\gamma \mathbb Z$ is simple. For the present purposes it is crucial that we can circumvent this issue thanks to the following lemma which is suggested by the work of Matui and Sato in \cite{MS}. In fact, most of it appears implicitly in \cite{MS}.

Recall that a $C^*$-algebra $A$ is stable when $A \otimes \mathbb K \simeq A$, where $\mathbb K$ is the $C^*$-algebra of compact operators on an infinite dimensional separable Hilbert space. 
 
\begin{lemma}\label{28-10-20} Let $B$ be a stable simple AF-algebra not isomorphic to $\mathbb K$, and let $\gamma \in \Aut(B)$ be an automorphism of $B$. There is an automorphism $\gamma' \in \Aut(B)$ such that
\begin{enumerate}
\item[a)] $\gamma'_* = \gamma_*$ on $K_0(B)$,
\item[b)] the restriction map 
$\mu \ \mapsto \ \mu|_B$
is a bijection from traces $\mu $ on $B \rtimes_{\gamma'} \mathbb Z$ onto the $\gamma'$-invariant traces on $B$, 
\item[c)] $B \rtimes_{\gamma'} \mathbb Z$ is $\mathcal Z$-stable; i.e $(B \rtimes_{\gamma'} \mathbb Z)\otimes \mathcal Z \simeq B \rtimes_{\gamma'} \mathbb Z$ where $\mathcal Z$ denotes the Jiang-Su algebra, \cite{JS}, and
\item[d)] $B \rtimes_{\gamma'} \mathbb Z$ is stable.
\end{enumerate} 
\end{lemma}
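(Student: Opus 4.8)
\noindent\emph{Proof strategy.} The plan is to produce $\gamma'$ by perturbing $\gamma$ inside a Jiang--Su tensor factor, so that $\gamma'$ becomes \emph{pointwise strongly outer} (every nonzero power extends to a properly outer automorphism of the tracial von Neumann closure in each invariant trace) while its action on $K_0$ is unchanged. Since $B$ is a simple AF-algebra it is $\mathcal Z$-stable, so I would fix isomorphisms $B \cong B \otimes \mathcal Z$ and $\mathcal Z \cong \mathcal Z \otimes \mathcal Z$ and regard $B \cong B \otimes \mathcal Z \otimes \mathcal Z$. Choosing $\rho \in \Aut(\mathcal Z)$ all of whose nonzero powers are strongly outer, I set
$$
\gamma' \ = \ \gamma \otimes \rho \otimes \id_{\mathcal Z} \ .
$$
Because $\rho_* = \id$ on $K_0(\mathcal Z) = \mathbb Z$, under the identification above $\gamma'_* = \gamma_* \otimes \id \otimes \id = \gamma_*$, which is a). Strong outerness of $\gamma'$ comes from the middle factor: in any invariant trace the weak closure factors as $\pi(B)'' \mathbin{\overline\otimes} R \mathbin{\overline\otimes} R$ (with $R$ the hyperfinite $\mathrm{II}_1$ factor), the extension of $\gamma'$ is $\bar\gamma \otimes \bar\rho \otimes \id$, and tensoring with the properly outer $(\bar\rho)^n$ keeps every power properly outer.

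The $\mathcal Z$-stability c) is then immediate from the construction rather than from the full crossed-product machinery: since the third tensor factor is fixed by $\gamma'$,
$$
B \rtimes_{\gamma'} \mathbb Z \ \cong \ \big[(B \otimes \mathcal Z) \rtimes_{\gamma \otimes \rho} \mathbb Z\big] \otimes \mathcal Z \ ,
$$
which manifestly absorbs $\mathcal Z$. For b), note first that any trace $\mu$ on $B \rtimes_{\gamma'} \mathbb Z$ restricts to a $\gamma'$-invariant trace $\tau = \mu|_B$ (apply $\mu$ to $u b u^*$). Passing to the GNS representation of $\mu$, the functionals $x \mapsto \mu(xU^n)$ on $\pi(B)''$, where $U$ is the image of $u$, are $(\bar{\gamma'})^n$-twisted traces, and proper outerness of $(\bar{\gamma'})^n$ for $n \neq 0$ forces them to vanish. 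Hence $\mu(bu^n) = 0$ for $n \neq 0$, i.e. $\mu = \tau \circ P$, so every trace is $\widehat{\gamma'}$-invariant and b) follows from Corollary \ref{28-10-20a}. Strong outerness also makes the $\mathbb Z$-action properly outer on the simple algebra $B$, so $B \rtimes_{\gamma'} \mathbb Z$ is simple by Kishimoto's theorem.

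Finally d) follows by combining the previous points: $B \rtimes_{\gamma'} \mathbb Z$ is simple, separable, nuclear, non-unital and, by c), $\mathcal Z$-stable; and since $B$ is stable it carries no nonzero bounded trace, whence by b) neither does $B \rtimes_{\gamma'} \mathbb Z$. R\o rdam's stability criterion for simple $\mathcal Z$-absorbing $C^*$-algebras then forces the crossed product to be stable. I expect the two genuinely delicate points to be the existence of $\rho \in \Aut(\mathcal Z)$ with all powers strongly outer, and the twisted-trace vanishing underlying b): the latter must be run not only at extremal traces but at every $\gamma'$-invariant trace, which is exactly why it matters that tensoring with $\bar\rho$ delivers \emph{proper} outerness on the possibly non-factorial closures $\pi(B)''$ rather than mere outerness.
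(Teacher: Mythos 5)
Your construction is essentially the one in the paper: both proofs realise $B$ as $B\otimes\mathcal Z\otimes\mathcal Z$ (the paper additionally inserts a $\mathbb K$ factor), place a special automorphism of $\mathcal Z$ in the middle slot and the identity in the last slot, and both draw that special automorphism from Sato's paper \cite{Sa}. The difference is in how b) and d) are closed. For b) the paper uses the \emph{weak Rohlin property} of $\theta\in\Aut(\mathcal Z)$ and runs a purely $C^*$-algebraic estimate with the Rohlin elements $g_0,\dots,g_k$ to show directly that $\mu((q_n\otimes 1)(b\otimes z)u^k(q_n\otimes 1))=0$ for $k\neq 0$; you instead use strong outerness and a von Neumann algebraic argument: pass to $\pi_\tau(B)''\mathbin{\overline\otimes}R\mathbin{\overline\otimes}R$ and kill the twisted traces $x\mapsto\mu(xU^n)$ by proper outerness. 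Your route is closer to Matui--Sato's original treatment and is conceptually cleaner, but it outsources two nontrivial lemmas that you only assert: (i) that tensoring an arbitrary automorphism of a possibly non-factorial semifinite algebra $M$ with an outer automorphism of $R$ yields a \emph{properly} outer automorphism of $M\mathbin{\overline\otimes}R$, and (ii) that a normal $\sigma$-twisted trace on a semifinite (not merely finite) von Neumann algebra vanishes when $\sigma$ is properly outer, together with the normal extension of $x\mapsto\mu(xU^n)$ from the Pedersen ideal. These are true but each needs a proof or a precise citation; the paper's Rohlin-tower computation is longer but self-contained modulo \cite{Sa}. You also need, as the paper's first step makes explicit, that the identification $B\cong B\otimes\mathcal Z\otimes\mathcal Z$ can be chosen to induce $x\mapsto x\otimes[1_{\mathcal Z}]\otimes[1_{\mathcal Z}]$ on $K_0$ (strong self-absorption of $\mathcal Z$); and note that ``$B$ simple AF $\Rightarrow$ $\mathcal Z$-stable'' is exactly where the hypothesis $B\not\simeq\mathbb K$ and the deep input \cite{CETWW} enter.

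The one place where your argument as written is genuinely shaky is d). The criterion ``simple, separable, nuclear, non-unital, $\mathcal Z$-stable, no bounded trace $\Rightarrow$ stable'' is not correct in the form stated without further qualification (tracelessness alone never implies stability --- cf.\ $\mathcal O_2$ --- so one must invoke the stably finite case of R\o rdam's dichotomy together with strict comparison and the Hjelmborg--R\o rdam characterisation, and check the hypotheses there). This detour is unnecessary: in your own picture $B\rtimes_{\gamma'}\mathbb Z\cong\bigl((B\otimes\mathcal Z)\rtimes_{\gamma\otimes\rho}\mathbb Z\bigr)\otimes\mathcal Z$ and $B\otimes\mathcal Z$ is stable, so a sequence of isometries in $M(B\otimes\mathcal Z)$ with range projections summing strictly to $1$ survives into $M\bigl((B\otimes\mathcal Z)\rtimes_{\gamma\otimes\rho}\mathbb Z\bigr)$ and gives stability of the crossed product directly; the paper avoids the issue altogether by building an explicit $\mathbb K$ tensor factor into the construction so that d) can be read off.
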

 
\begin{proof} The first step is to show that there is an isomorphism $\phi : B \to B \otimes \mathcal Z$ such that $\phi_* = {(\id_B \otimes 1_{\mathcal Z})}_*$. Since $\mathcal Z$ is strongly self-absorbing in the sense of Toms and Winter, \cite{TW}, it suffices for this to show that $B$ is $\mathcal Z$-stable. For a unital simple AF algebra this follows from Theorem A and Corollary C in \cite{CETWW}, and $\mathcal Z$-stability is invariant under stable isomorphism by Corollary 3.2 in \cite{TW}. It follows from \cite{B} that $B$ is stably isomorphic to a unital AF algebra, and we deduce in this way the existence of $\phi$.

For the second step we use \cite{Sa} to obtain an automorphism $\theta$ of $\mathcal Z$ with the weak Rohlin property; that is, with the property that for each $k \in \mathbb N$ there is a sequence $\{f_n\}$ in $\mathcal Z$ such that
\begin{itemize}
\item $0 \leq f_n \leq 1_{\mathcal Z}$ for all $n$,
\item $\lim_{n \to \infty} f_na -af_n = 0$ for all $a \in \mathcal Z$,
\item $\lim_{n \to \infty} \theta^j(f_n)f_n = 0$ for $j=1,2,3,\cdots, k$, and
\item $\lim_{n \to \infty} \tau\left(1_{\mathcal Z} - \sum_{j=0}^{k}\theta^j(f_n)\right) = 0$ ,
\end{itemize}
where $\tau$ is the trace state of $\mathcal Z$. From the first step, applied twice, it follows that there is an isomorphism $\phi : B \to B \otimes \mathcal Z \otimes \mathcal Z$ such that $\phi_* = (\id_B \otimes 1_{\mathcal Z} \otimes 1_{\mathcal Z})_*$. Since $B$ is stable there is also a $*$-isomorphism $\psi_0 : B \to \mathbb K \otimes B$ such that ${\psi_0}_* = (e \otimes \id_B)_*$, where $e$ is a minimal non-zero projection in $\mathbb K$. Set 
$$
\psi = \left(\psi_0 \otimes \id_{\mathcal Z} \otimes  \id_{\mathcal Z}\right) \circ \phi : B \to \mathbb K \otimes B \otimes \mathcal Z \otimes \mathcal Z \ 
$$
and
$$
\gamma' = \psi^{-1} \circ \left(\id_{\mathbb K} \otimes \gamma \otimes \theta \otimes \id_{\mathcal Z}\right) \circ \psi \ .
$$ 
It follows from the K\"unneth theorem that $\psi_* = (e\otimes \id_B \otimes 1_{\mathcal Z} \otimes 1_{\mathcal Z})_*$ and hence $\left(\id_{\mathbb K} \otimes \gamma \otimes \theta \otimes \id_{\mathcal Z}\right)_* \circ \psi_* = (e \otimes \gamma \otimes 1_{\mathcal Z} \otimes 1_{\mathcal Z})_*$. Thus $\gamma'_* = \gamma$.
To complete the proof it suffices to verify that b), c) and d) hold when $B$ is replaced by $\mathbb K \otimes B \otimes \mathcal Z\otimes \mathcal Z$ and $\gamma'$ by $\id_{\mathbb K} \otimes \gamma \otimes \theta \otimes \id_{\mathcal Z}$. The properties c) and d) follow immediately because $ \mathcal Z \otimes \mathcal Z \simeq \mathcal Z$, $\mathbb K \otimes \mathbb K \simeq \mathbb K$ and
$$
(\mathbb K \otimes B \otimes \mathcal Z \otimes \mathcal Z) \rtimes_{\id_{\mathbb K} \otimes\gamma \otimes \theta \otimes \id_{\mathcal Z}} \mathbb Z \ \simeq \ \mathbb K \otimes \left((B \otimes \mathcal Z)  \rtimes_{\gamma \otimes \theta} \mathbb Z \right) \otimes \mathcal Z \ .
$$
In the final step we verify b). For this, set $B' = \mathbb K \otimes B \otimes \mathcal Z$ and $\gamma'' = \id_{\mathbb K} \otimes \gamma \otimes \id_{\mathcal Z}$. Then 
$$
( \mathbb K \otimes B \otimes \mathcal Z \otimes \mathcal Z) \rtimes_{\id_{\mathbb K} \otimes \gamma \otimes \theta \otimes \id_{\mathcal Z}} \mathbb Z \ \simeq \ (B'\otimes \mathcal Z) \rtimes_{\gamma'' \otimes \theta} \mathbb Z \ ,
$$
and it suffices to verify that b) holds when $B$ is replaced by $B' \otimes \mathcal Z$ and $\gamma'$ by $\gamma''\otimes \theta$. Let $P : (B' \otimes \mathcal Z) \rtimes_{\gamma''\otimes \theta} \mathbb Z \to B' \otimes \mathcal Z$ be the canonical conditional expectation. In view of Corollary \ref{28-10-20a} what remains is to consider a trace $\mu$ on $(B '\otimes \mathcal Z) \rtimes_{\gamma''\otimes \theta} \mathbb Z$ and show that $\mu = \mu \circ P$. For this, let $\{q_n\}$ be an approximate unit in $B'$ consisting of projections. Since $q_n \otimes 1_{\mathcal Z}$ is a projection and therefore contained in the Pedersen ideal, $\mu(q_n \otimes 1_{\mathcal Z}) < \infty$, cf. \cite{Pe}. When $a \geq 0$ in $ (B' \otimes \mathcal Z) \rtimes_{\gamma''\otimes \theta} \mathbb Z $, 
$$
\mu(a) = \lim_n \mu\left(\sqrt{a}(q_n \otimes 1_{\mathcal Z}) \sqrt{a}\right) =  \lim_n \mu\left((q_n \otimes 1_{\mathcal Z})a(q_n\otimes 1_{\mathcal Z})\right) \ ,
$$
and similarly $
\mu(P(a)) =   \lim_n \mu\left((q_n \otimes 1_{\mathcal Z})P(a)(q_n\otimes 1_{\mathcal Z})\right)$.
It suffices therefore to show that 
$$
\mu\left((q_n \otimes 1_{\mathcal Z})a(q_n\otimes 1_{\mathcal Z})\right) = \mu\left((q_n \otimes 1_{\mathcal Z})P(a)(q_n\otimes 1_{\mathcal Z})\right)
$$ 
for all $n$. For this note that $x \mapsto \mu\left((q_n \otimes 1_{\mathcal Z})x(q_n\otimes 1_{\mathcal Z})\right)$ extends to a bounded positive linear functional of norm $\mu\left(q_n \otimes 1_{\mathcal Z}\right)$ on $(B' \otimes \mathcal Z) \rtimes_{\gamma''\otimes \theta} \mathbb Z$. It suffices therefore to consider positive elements $b \in B'$, $z \in \mathcal Z$ and $k \in \mathbb Z \backslash \{0\}$, and show that 
\begin{equation}\label{23-10-20}
\mu((q_n \otimes 1_{\mathcal Z})(b\otimes z) u^k(q_n\otimes 1_{\mathcal Z})) = 0
\end{equation}
when $u$ is the canonical unitary in the multiplier algebra of $(B' \otimes \mathcal Z) \rtimes_{\gamma''\otimes \theta} \mathbb Z$ such that $\Ad u = \gamma'' \otimes \theta$ on $B' \otimes \mathcal Z$. Since the complex conjugate of $\mu((q_n \otimes 1_{\mathcal Z})(b\otimes z) u^{-k}(q_n\otimes 1_{\mathcal Z}))$ is $\mu((q_n \otimes 1_{\mathcal Z})({\gamma''}^k(b)\otimes \theta^k(z)) u^{k}(q_n\otimes 1_{\mathcal Z}))$
we may assume $k \geq 1$. Let $\epsilon > 0$. Since 
$a \mapsto \mu(q_n \otimes a)$ is a bounded trace on $\mathcal Z$ it must be a scalar multiple of the unique trace state $\tau$ of $\mathcal Z$. It follows therefore from the properties of $\theta$ that there are elements $0 \leq g_j \leq 1_{\mathcal Z}, \ j = 0,1,2, \cdots , k$, in $\mathcal Z$ such that 
\begin{align}
&\left\|g_jz-zg_j\right\| \leq \epsilon \ \text{for all} \ j \ ,\label{23-10-20a} \\
& \left\|g_ig_j \right\| \leq \epsilon \ \text{for all} \ i,j, \ i \neq j , \label{23-10-20b} \\
& \left\|\theta^k(g_j)g_j\right\| \leq \epsilon \ \text{for all } \ j, \ \text{and} \label{23-10-20c} \\
&\left|\mu(q_n \otimes (1_{\mathcal Z}- \sum_{j=0}^{k} g_j)) \right| \ \leq \ \epsilon \mu(q_n\otimes 1_{\mathcal Z}). \ \label{23-10-20d}
\end{align}
In the following, when $s$ and $t$ are complex numbers such that for all $\delta > 0$ the $\epsilon$ in \eqref{23-10-20a}-\eqref{23-10-20d} can be chosen so small that $|s-t|\leq  \delta$, we will write $s \sim t$. Set
$$
y = 1_{\mathcal Z} - \sum_{j=0}^k g_j \ .
$$
It follows from \eqref{23-10-20a} that
$$
\mu((q_n \otimes 1_{\mathcal Z})(b\otimes z y) u^k(q_n\otimes 1_{\mathcal Z})) \ \sim \ \mu((q_n \otimes 1_{\mathcal Z})(b\otimes yz ) u^k(q_n\otimes 1_{\mathcal Z}))\ ,
$$
and from the Cauchy-Schwarz inequality that
\begin{equation}\label{23-10-20e}
\begin{split}
&\left|\mu((q_n \otimes 1_{\mathcal Z})(b\otimes yz ) u^k(q_n\otimes 1_{\mathcal Z}))\right| \ =  \ \left|\mu((q_n \otimes y)(b\otimes z ) u^k(q_n\otimes 1_{\mathcal Z}))\right| \\
& \leq  \ \sqrt{  \mu(q_n \otimes y^2)}\|b\|\|z\| \sqrt{\mu(q_n\otimes 1_{\mathcal Z})} \ .
\end{split}
\end{equation}
Note that it follows from \eqref{23-10-20b} that $\left\| (\sum_{j=0}^{k} g_j)^2 - \sum_{j=0}^{k} g_j^2\right\| \leq (k+1)^2\epsilon$ and hence
$$(\sum_{j=0}^{k} g_j)^2 \leq \sum_{j=0}^{k} g_j^2 + (k+1)^2 \epsilon 1_{\mathcal Z}\ \leq \  \sum_{j=0}^{k} g_j + (k+1)^2 \epsilon 1_{\mathcal Z}\ .
$$ 
It follows that $y^2 \leq y + (k+1)^2 \epsilon 1_{\mathcal Z}$ and then from \eqref{23-10-20d} that
$$
0 \ \leq  \ \mu(q_n \otimes y^2) \ \leq  \ ((k+1)^2 +1) \mu(q_n\otimes 1_{\mathcal Z}) \epsilon \ .
$$
Combined with \eqref{23-10-20e} this shows that $\mu((q_n \otimes 1_{\mathcal Z})(b\otimes yz ) u^k(q_n\otimes 1_{\mathcal Z})) \sim 0$, so to obtain \eqref{23-10-20} it suffices to show that $\mu((q_n \otimes 1_{\mathcal Z})(b\otimes zg_j) ) u^k(q_n\otimes 1_{\mathcal Z})) \sim 0$ for each $j$. It follows from \eqref{23-10-20a} that
$$
\mu((q_n \otimes 1_{\mathcal Z} )(b\otimes zg_j) ) u^k(q_n\otimes 1_{\mathcal Z})) \ \sim \ \mu((q_n \otimes 1_{\mathcal Z})(b\otimes \sqrt{g_j} z\sqrt{g_j}) ) u^k(q_n\otimes 1_{\mathcal Z})) \ . 
$$ 
Using Proposition 5.5.2 in \cite{Pe} for the second equality we get that 
\begin{align*}
&\mu((q_n \otimes 1_{\mathcal Z})(b\otimes \sqrt{g_j} z\sqrt{g_j})  u^k(q_n\otimes 1_{\mathcal Z})) \\
&= \mu((q_n \otimes \sqrt{g_j})(b\otimes  z \sqrt{g_j} ) u^k(q_n\otimes 1_{\mathcal Z})) \\
& = \mu((q_n \otimes 1_{\mathcal Z})(b\otimes  z \sqrt{g_j} ) u^k(q_n\otimes \sqrt{g_j})) \\
& = \mu((q_n \otimes 1_{\mathcal Z})(b{\gamma''}^k(q_n)\otimes  z \sqrt{g_j} \theta^k(\sqrt{g_j}) ) u^k(q_n\otimes 1_{\mathcal Z})) \ .
\end{align*}
It follows therefore from \eqref{23-10-20c} that
$ \mu((q_n \otimes 1_{\mathcal Z})(b\otimes zg_j) ) u^k(q_n\otimes 1_{\mathcal Z})) \sim 0$ as desired.
\end{proof}

The next lemma is well-known, but I could not find it in the litterature.

\begin{lemma}\label{23-10-20g} Let $B$ be an AF algebra. There is a bijective correspondence between traces $\tau$ on $B$ and the set of non-zero positive homomorphisms $ K_0(B) \to \mathbb R$. The bijection is given by the formula $\tau_*[e] = \tau(e)$ when $e$ is a projection in $B$.
\end{lemma} 
 \begin{proof} Left to the reader.
 \end{proof}

\section{Proof of the main result}

Let $F$ and $F_1$ be closed subsets of $[0,1]$ such that $\left(F \cup F_1\right) \cap \{0,1\} = \emptyset$, $F \neq \emptyset$ and either
\begin{equation}\label{08-11-20}
1/2 \notin F \cup F_1\ ,
\end{equation}
or
\begin{equation}\label{08-11-20a}
1/2 \in  F \cap F_1 \ .
\end{equation}
As in \cite{BEH} we denote by $\alpha$ the automorphism of $G_0$ given by multiplication with the function $t \mapsto \frac{t}{1-t}$. Set $G = \oplus_\mathbb Z G_0$ and
$$
G^+ = \left\{ x \in G: \ \sum_{n\in \mathbb Z}\alpha^n(x_n)(t) > 0 \ \forall t \in F,  \ \sum_{n\in \mathbb Z}x_n(t) > 0 \ \forall t \in F_1   \right\} \cup \{0\} \  .
$$
In order to work with $(G,G^+)$ we introduce the following notation. When $a,b \in G_0$, set 
$$
[[a]]_n = \begin{cases} 0, & \ n \neq 0 \\ a, & \ n = 0 \ , \end{cases}
$$
and
$$
[[-b,a,b]]_n = \begin{cases} 0, & \ n \leq -2 \\ -b , & \ n = -1, \\ a , & \ n =0, \\ b, & \ n =1, \\ 0 , & \ n \geq 2 \ . \end{cases}
$$
 Then $[[a]] = \left([[a]]_n\right)_{n \in \mathbb Z} \in G, [[-b,a,b]] = \left([[-b,a,b]]_n\right)_{n \in \mathbb Z}\in G$. In the following we denote by $C_\mathbb R(F')$ the set of continuous real-valued functions on a compact subset $F'$ of $\mathbb R$.

\begin{lemma}\label{22-10-20a} Let $F'$ be a closed subset of $[0,1]$ such that $F' \cap \{0,1\} = \emptyset$. Then
\begin{itemize}
\item $\left\{f|_{F'}: \ f \in \mathbb Z[t] \right\}$ is dense in $C_{\mathbb R}(F')$, and
\item the functions of the form $ F' \ni t \mapsto \frac{2t-1}{t(1-t)}f(t)$ for some $f \in \mathbb Z[t]$ are dense in $C_\mathbb R(F')$ when $1/2 \notin F'$, and in $\{g \in C_\mathbb R(F'): \ g(1/2) = 0 \}$ when $1/2 \in F'$.
\end{itemize}
\end{lemma}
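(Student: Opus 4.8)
The plan is to treat both assertions as consequences of a single density statement for the subring $\mathbb{Z}[t] \subseteq C_\mathbb{R}(F')$, together with multiplication by the fixed continuous function $\psi(t) = \frac{2t-1}{t(1-t)}$, which is continuous on $F'$ precisely because $F' \subseteq (0,1)$. For the first bullet I would argue that the uniform closure $A$ of $\mathbb{Z}[t]$ in $C_\mathbb{R}(F')$ is a closed subring containing the coordinate function $t$, and that $t$ already separates the points of $F'$. By the real Stone–Weierstrass theorem it then suffices to show that $A$ contains every real constant: once $\lambda \cdot 1 \in A$ for all $\lambda$, multiplicativity makes $A$ a real subalgebra, and Weierstrass forces $A = C_\mathbb{R}(F')$. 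Thus the whole first bullet reduces to approximating arbitrary real constants uniformly on $F'$ by integer polynomials.

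The heart of the matter — and the step I expect to be the main obstacle, since one cannot simply round the coefficients of a real polynomial (the powers $t^k$ are not uniformly small) — is to manufacture such integer polynomials. Here I would use the iterated logistic map: put $\phi(s) = s(1-s)$ and define $x_0 = t$, $x_{n+1} = \phi(x_n)$, so each $x_n \in \mathbb{Z}[t]$, and, since $F' \subseteq [\eta,1-\eta]$ for some $\eta > 0$, the functions $x_n$ decrease to $0$, uniformly by Dini's theorem. Writing $a_n = 1/x_n$ one verifies the clean telescoping identity $a_{n+1} - a_n = 1/(1-x_n)$, whose right-hand side tends to $1$ uniformly; averaging gives $a_n/n \to 1$ and hence $n x_n \to 1$ uniformly on $F'$. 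Consequently, for integers $m = m(n)$ with $m/n \to \lambda \ge 0$ the integer polynomials $m x_n$ converge uniformly to the constant $\lambda$, and negation covers $\lambda < 0$. This places every real constant in $A$ and completes the first bullet.

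For the second bullet I would use that multiplication $M_\psi$ by $\psi$ is a bounded operator on $C_\mathbb{R}(F')$ and that, by the first bullet, $\overline{\mathbb{Z}[t]} = C_\mathbb{R}(F')$. If $1/2 \notin F'$ then $\psi$ is nowhere zero on $F'$, so $M_\psi$ is a linear homeomorphism of $C_\mathbb{R}(F')$ and carries the dense set $\mathbb{Z}[t]$ onto a dense set; hence $\{\psi f : f \in \mathbb{Z}[t]\}$ is dense in $C_\mathbb{R}(F')$. If $1/2 \in F'$, continuity of $M_\psi$ together with $\overline{\mathbb{Z}[t]} = C_\mathbb{R}(F')$ yields $\overline{\psi \mathbb{Z}[t]} = \overline{\psi C_\mathbb{R}(F')}$, so it remains to show that $\psi C_\mathbb{R}(F')$ is dense in the closed subspace $\{g : g(1/2) = 0\}$, which plainly contains it since $\psi(1/2) = 0$.

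To establish that last density I would taper. Given $g$ with $g(1/2) = 0$ and $\varepsilon > 0$, multiply $g$ by a continuous cutoff equal to $1$ off a small neighborhood of $1/2$ and equal to $0$ near $1/2$, obtaining $\tilde g$ with $\|g - \tilde g\| < \varepsilon$ (possible by continuity of $g$ at $1/2$). Since $\tilde g$ vanishes on a neighborhood of the only zero of $\psi$, the quotient $\tilde g/\psi$ extends to an element of $C_\mathbb{R}(F')$, and $\psi \cdot (\tilde g/\psi) = \tilde g$. Hence every $g$ with $g(1/2) = 0$ is approximable by elements of $\psi C_\mathbb{R}(F')$, which proves the required density and finishes the lemma.
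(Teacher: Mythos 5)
Your proof is correct, and it follows the same overall skeleton as the paper's: establish density of $\mathbb Z[t]|_{F'}$ first, then deduce the second item by multiplying by the fixed continuous function $\frac{2t-1}{t(1-t)}$. The differences lie in how the two sub-steps are justified. For the first item the paper simply defers to the argument in the proof of Lemma 2.1 of \cite{BEH}, whereas you give a complete, self-contained derivation: reduce via Stone--Weierstrass to approximating real constants by integer polynomials, and manufacture those constants from the logistic iterates $x_{n+1}=x_n(1-x_n)$ via the telescoping identity $1/x_{n+1}-1/x_n=1/(1-x_n)$ and the resulting uniform limit $nx_n\to 1$ on $F'\subseteq[\eta,1-\eta]$. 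That computation checks out (Dini gives the uniform convergence $x_n\to 0$, the Ces\`aro average handles $a_n/n\to 1$, and the $a_0/n$ term is harmless since $1/t\le 1/\eta$ on $F'$), and it is essentially the classical P\'al/Fekete-type argument that \cite{BEH} relies on, so you have in effect supplied the proof the paper outsources. For the second item in the case $1/2\in F'$, the paper observes via the Stone--Weierstrass theorem applied to the non-unital subalgebra $(2t-1)C_\mathbb R(F')$ that the functions $(2t-1)h$, $h\in C_\mathbb R(F')$, are dense in $\{g:g(1/2)=0\}$ and then lifts through the first item; you instead show directly that $\frac{2t-1}{t(1-t)}\,C_\mathbb R(F')$ is dense in that hyperplane by tapering $g$ to vanish near $1/2$ and dividing off, which is more elementary and avoids the ideal version of Stone--Weierstrass. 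Both routes are valid; yours trades a citation and an algebra-theoretic density statement for explicit constructions.
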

\begin{proof} The first item follows from the argument in the proof of 2.1 in \cite{BEH} since we assume $F' \cap \{0,1\} = \emptyset$, and the second follows from the first; immediately when $1/2 \notin F'$ and when $1/2 \in F'$ by first observing that the Stone-Weierstrass theorem shows that the functions of the form $F' \ni t \mapsto (2t-1)h(t)$ for some $h \in C_\mathbb R(F')$ are dense in $\{g \in C_\mathbb R(F'): \ g(1/2) = 0 \}$.
\end{proof}

\begin{lemma}\label{20-10-20} $(G,G^+)$ is a simple dimension group. 
\end{lemma}
\begin{proof} It is straightforward to show that $G^+ \cap (-G^+) = \{0\}$ and that $G$ is unperforated. We show that 
$G$ has the Riesz interpolation property. Let $x^i =(x_n^i)_{n \in \mathbb Z}, \ y^j =(y_n^j)_{n \in \mathbb Z} \in G, \ i,j = 1,2$, and assume that $y^j - x^i \in G^+$ for all $i,j \in 1,2$. If $x^{i'} = y^{j'}$ for some $i',j'$, set $z = x^{i'}$. Then $x^i \leq z \leq y^j$ in $G$ for all $i,j$. Otherwise $\sum_{n \in \mathbb Z} \alpha^n(x_n^i)(t) < \sum_{n \in \mathbb Z} \alpha^n(y_n^j)(t) \ \forall t \in F$ and  $\sum_{n \in \mathbb Z} x_n^i(t) < \sum_{n \in \mathbb Z} y_n^j(t) \ \forall t \in F_1$ for all $i,j$. It follows from Lemma \ref{22-10-20a} that there is an element $a \in \mathbb Z[t]$ such that
$$
\sum_{n \in \mathbb Z} x_n^i(t) < a(t) < \sum_{n \in \mathbb Z} y_n^j(t) \ 
$$
for all $t \in F_1$ and all $i,j$. We can also find $h\in C_\mathbb R(F)$ such that
$$
\sum_{n \in \mathbb Z} \alpha^n(x_n^i)(t) - a(t) < h(t) < \sum_{n \in \mathbb Z} \alpha^n(y_n^j)(t) - a(t) 
$$
for all $t \in F$ and all $i,j$, and such that $h(\frac{1}{2}) = 0$ when $\frac{1}{2} \in F$, since then
$$
\sum_{n \in \mathbb Z} \alpha^n(x_n^i)(1/2) - a(1/2) = \sum_{n \in \mathbb Z} x_n^i(1/2) - a(1/2) \ <  \ 0
$$
and
$$
\sum_{n \in \mathbb Z} \alpha^n(y_n^j)(1/2) - a(1/2) = \sum_{n \in \mathbb Z} y_n^j(1/2) - a(1/2) \ > \ 0 \ .
$$
It follows from Lemma \ref{22-10-20a} that we can find $b \in \mathbb Z[t]$ such that
$$
\sum_{n \in \mathbb Z} \alpha(x_n^i)(t) - a(t) < \frac{(2t-1)}{t(1-t)}b(t) < \sum_{n \in \mathbb Z} \alpha^n(y_n^j)(t) - a(t)  
$$
for all $t \in F$ and all $i,j$. It is then straightforward to check that
$$
x^i \leq [[-b,a,b]] \leq y^j
$$
in $G$ for all $i,j$. To show that $(G,G^+)$ is a simple dimension group it remains now only to show that every non-zero element $x = (x_n)_{n \in \mathbb Z}$ of $G^+$ is an order unit. Let $y \in G$. Note that there is a $k \in \mathbb N$ such that $\sum_{n \in \mathbb Z} y_n(t) < k \sum_{n \in \mathbb Z} x_n(t)$ for all $t \in F_1$ and $\sum_{n \in \mathbb Z} \alpha^n(y_n)(t) < k \sum_{n \in \mathbb Z} \alpha^n(x_n)(t)$ for all $t \in F$. It follows that $y \leq kx$ in $G$. 
\end{proof}

It follows from Lemma \ref{20-10-20} and \cite{EHS} that there is a simple AF algebra $B_F$ whose $K_0$-group and dimension range is isomorphic to $(G ,G^+)$. Furthermore, it follows from \cite{E} that $B_F$ is stable and that there is an automorphism $\gamma$ of $B_F$ such that
\begin{equation}\label{23-10-20f}
\gamma_*\left((x_n)_{n \in \mathbb Z}\right) = \left(\alpha(x_{n+1})\right)_{n \in \mathbb Z} \ 
\end{equation}
under the identification $K_0(B_F) = G$. Using Lemma \ref{20-10-20} we choose $\gamma$ such that it has the following additional properties:
\begin{poem}\mbox{}\\[-\baselineskip]
  \begin{enumerate}\label{listref}
    \item The restriction map $\mu \ \mapsto \ \mu|_{B_F}$ is a bijection from the traces $\mu$ on $   B_F \rtimes_{\gamma} \mathbb Z$ onto the $\gamma$-invariant traces on $B_F$. \\
    \item $B_F \rtimes_{\gamma} \mathbb Z$ is stable. \\   
    \item $B_F \rtimes_{\gamma} \mathbb Z$ is $\mathcal Z$-stable.
    \end{enumerate}
\end{poem}

Set $C = B_F \rtimes_\gamma \mathbb Z$. No power of $\gamma$ is inner since $\gamma_*^n \neq \id_G$ for $n \neq 0$ and it follows therefore from \cite{Ki1} that $C$ is simple. The Pimsner-Voiculescu exact sequence shows that $K_1(C) = 0$ since $\id_G - \gamma_*$ is injective, and that
$$
K_0(C) = \coker (\id_G-\gamma_*) = G/\left(\id_G - \gamma_*\right)(G) \ .
$$
Under this identification the map $K_0(B_F) \to K_0(C)$ induced by the inclusion $B_F \subseteq C$ is the quotient map $q : G \to  G/\left(\id_G - \gamma_*\right)(G)$. Hence $G^+/\left(\id_G - \gamma_*\right)(G) \subseteq K_0(C)^+$

\begin{lemma}\label{27-10-20d} Let $(x_n)_{n \in \mathbb Z} \in G$. Then $\sum_{n\in \mathbb Z} \alpha^n(x_n) = 0$ if and only if $(x_n)_{n \in \mathbb Z} = (\id_G - \gamma_*)\left((y_n)_{n \in \mathbb Z}\right)$ for some $(y_n)_{n \in \mathbb Z} \in G$.
\end{lemma}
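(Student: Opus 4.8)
The plan is to reformulate the statement through the group homomorphism
\[
\Phi \colon G \to G_0, \qquad \Phi\bigl((x_n)_{n\in\mathbb Z}\bigr) = \sum_{n\in\mathbb Z}\alpha^n(x_n),
\]
which is well defined because every element of $G = \oplus_\mathbb Z G_0$ has finite support. The claim is then exactly that $\ker\Phi = (\id_G-\gamma_*)(G)$. The inclusion $(\id_G-\gamma_*)(G)\subseteq\ker\Phi$, giving the ``if'' direction, follows immediately from the identity $\Phi\circ\gamma_* = \Phi$, which I would obtain by reindexing a finite sum using \eqref{23-10-20f}:
\[
\Phi\bigl(\gamma_*((x_n)_n)\bigr) = \sum_{n\in\mathbb Z}\alpha^{n}\bigl(\alpha(x_{n+1})\bigr) = \sum_{n\in\mathbb Z}\alpha^{n+1}(x_{n+1}) = \Phi\bigl((x_n)_n\bigr).
\]
Hence $\Phi\circ(\id_G-\gamma_*)=0$.

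The substance is the reverse inclusion, where I must exhibit an explicit primitive. Here I would use that $\alpha$ is multiplication by the unit $t/(1-t)$ of $G_0$ (its inverse being multiplication by $t^{-1}-1$), so that $\alpha^{-n}$ is available. Given $(x_n)_n\in\ker\Phi$, I would pick $M$ with $x_k=0$ for $k>M$ and set
\[
y_n = \sum_{k=n}^{M}\alpha^{k-n}(x_k),
\]
with the convention that the sum is empty, hence $0$, for $n>M$. A one-line telescoping then yields $y_n-\alpha(y_{n+1}) = x_n$ for every $n$, that is, $(x_n)_n = (\id_G-\gamma_*)\bigl((y_n)_n\bigr)$ componentwise.

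The only genuinely non-trivial point --- and the one I expect to be the crux --- is that $(y_n)_n$ actually lies in $G$, i.e.\ has finite support, and this is exactly where the hypothesis $\Phi((x_n)_n)=0$ is used. For $n>M$ we have $y_n=0$ by construction. Choosing $N$ with $x_k=0$ for $k<N$, then for any $n\le N$ the defining sum runs over the entire support of $(x_k)_k$, so
\[
y_n = \alpha^{-n}\sum_{k\in\mathbb Z}\alpha^{k}(x_k) = \alpha^{-n}\,\Phi\bigl((x_k)_k\bigr) = 0.
\]
Thus $(y_n)_n$ is supported in the finite set $\{N+1,\dots,M\}$ and belongs to $G$, completing the reverse inclusion. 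Solving the recursion formally is automatic; it is precisely the vanishing of $\Phi$ that prevents the primitive from ``running off to $-\infty$'', which is the heart of the equivalence.
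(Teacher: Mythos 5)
Your proof is correct and follows essentially the same route as the paper: the forward direction by reindexing the telescoping sum, and the converse by the explicit primitive $y_n=\sum_{k\ge n}\alpha^{k-n}(x_k)$, with the hypothesis $\sum_n\alpha^n(x_n)=0$ used exactly as in the paper to show that $y_n$ vanishes for $n$ below the support of $(x_k)_k$. The only cosmetic difference is that you use separate bounds $N$ and $M$ for the support rather than a single symmetric bound $L$.
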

\begin{proof} Note that $(\id_G - \gamma_*)\left((y_n)_{n \in \mathbb Z}\right) = \left(y_n -\alpha(y_{n+1})\right)_{n \in \mathbb Z}$ and that
$$
\sum_{k\in \mathbb Z} \alpha^k\left(y_k - \alpha(y_{k+1})\right) \  = \ 0 \ .
$$
 For the converse, choose $L\in \mathbb N$ so big that $x_n =0$ when $|n| \geq L$. Set $y_n = 0$ when $n > L$ and
$$
y_L = x_L , \ y_{L-1} = x_{L-1} + \alpha(x_L), \ y_{L-2} = x_{L-2} + \alpha(x_{L-1})+\alpha^2(x_L) , \ \text{etc} .
$$ 
Then 
$$
y_{-L} = x_{-L} + \sum_{i=1}^{2L}\alpha^i(x_{-L+i}) = \sum_{i=1}^{2L}\alpha^i(x_{-L+i}) = \alpha^L\left( \sum_{i=1}^{2L}\alpha^{-L+i}(x_{-L+i})\right) = 0 \ ,
$$
and hence $y_k = 0$ when $k < -L$. It follows that $(y_n)_{n \in \mathbb Z} \in G$ and $(\id_G -\gamma_*)\left((y_n)_{n\in \mathbb Z}\right) = (x_n)_{n \in \mathbb Z}$.
\end{proof}

It follows from Lemma \ref{27-10-20d} that we can define an injective homomorphism $S :  G/\left(\id_G - \gamma_*\right)(G) \to G_0$ such that 
$$
S\left(q((x_n)_{n \in \mathbb Z})\right) \ = \ \sum_{n \in \mathbb Z} \alpha^n(x_n) \ .
$$
$S$ is surjective since $S(q([[a]])) = a$ when $a \in G_0$ and hence an isomorphism with inverse $S^{-1}$ given by
\begin{equation}\label{27-10-20h}
S^{-1}(a) = q([[a]]) \ .
\end{equation}

\begin{lemma}\label{09-10-20} Let $t \in F_1$. There is a trace $\tau_t$ on $B_F$ such that
$$
{\tau_t}_*((x_n)_{n \in \mathbb Z}) \ = \ \sum_{n \in \mathbb Z } x_n(t) \  \ \ \forall (x_n)_{n \in \mathbb Z}\in G \ ,
$$
and when $t \in F$ there is a $\gamma$-invariant trace $\tau'_t$ on $B_F$ such that 
$$
{\tau'_t}_*((x_n)_{n \in \mathbb Z}) \ = \ \sum_{n \in \mathbb Z} \left(\frac{t}{1-t}\right)^{n}x_n(t) \ \ \ \forall (x_n)_{n \in \mathbb Z}\in G \ .
$$
\end{lemma}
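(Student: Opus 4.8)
The plan is to invoke \reflemma{23-10-20g}, which identifies traces on the AF algebra $B_F$ with non-zero positive homomorphisms $K_0(B_F) = G \to \mathbb R$, and thereby reduce the entire statement to writing down two such homomorphisms explicitly as evaluations at $t$. Throughout I use that $t \in \, ]0,1[$: this follows from the standing hypothesis $(F \cup F_1) \cap \{0,1\} = \emptyset$, so that both evaluation at $t$ on $G_0$ and the scalar $\frac{t}{1-t}$ make sense.

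First I would treat $t \in F_1$. Define $\phi_t : G \to \mathbb R$ by $\phi_t\big((x_n)_{n \in \mathbb Z}\big) = \sum_{n \in \mathbb Z} x_n(t)$. The sum is finite because $(x_n)_{n \in \mathbb Z} \in \oplus_{\mathbb Z} G_0$ has finite support, and $\phi_t$ is plainly additive. The one substantive point is positivity, and it is immediate from the way $G^+$ was defined: every non-zero $x \in G^+$ satisfies $\sum_n x_n(t) > 0$ for all $t \in F_1$, so $\phi_t(x) > 0$, while $\phi_t(0) = 0$; thus $\phi_t(G^+) \subseteq \mathbb R_{\geq 0}$. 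Since $\phi_t([[1]]) = 1$, the homomorphism $\phi_t$ is non-zero, and \reflemma{23-10-20g} produces a trace $\tau_t$ with ${\tau_t}_* = \phi_t$, which is the asserted formula. The case $t \in F$ is entirely analogous, with the weights supplied by $\alpha$: I would set $\psi_t\big((x_n)_{n \in \mathbb Z}\big) = \sum_n \alpha^n(x_n)(t) = \sum_n \big(\tfrac{t}{1-t}\big)^n x_n(t)$, using that $\alpha$ is multiplication by $t \mapsto \frac{t}{1-t}$. Finiteness and additivity are as before, positivity now follows from the \emph{first} defining condition of $G^+$ (with $t \in F$), and $\psi_t([[1]]) = 1$ gives $\psi_t \neq 0$, so \reflemma{23-10-20g} yields a trace $\tau'_t$ with ${\tau'_t}_* = \psi_t$.

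The only step that is not purely formal is the $\gamma$-invariance of $\tau'_t$. Here I would use the formula \eqref{23-10-20f} for $\gamma_*$ together with a shift of the summation index to compute
\begin{align*}
{\tau'_t}_*\big(\gamma_*((x_n)_{n \in \mathbb Z})\big) &= \sum_n \alpha^n\big(\alpha(x_{n+1})\big)(t) = \sum_n \alpha^{n+1}(x_{n+1})(t) \\
&= \sum_m \alpha^m(x_m)(t) = {\tau'_t}_*\big((x_n)_{n \in \mathbb Z}\big),
\end{align*}
so that $(\tau'_t \circ \gamma)_* = {\tau'_t}_* \circ \gamma_* = {\tau'_t}_*$. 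Since $\tau'_t \circ \gamma$ is again a trace on $B_F$ and the correspondence of \reflemma{23-10-20g} is a bijection, this equality of induced homomorphisms forces $\tau'_t \circ \gamma = \tau'_t$, giving $\gamma$-invariance. The same computation applied to $\phi_t$ produces $\tau_t \circ \gamma = \frac{t}{1-t}\,\tau_t$ instead, which is why invariance is claimed only for $\tau'_t$; this scaling is exactly the relation $\tau \circ \gamma = e^{-\beta}\tau$ that will later match $\tau_t$ to the $\beta$-KMS weights via \reflemma{26-10-20}.
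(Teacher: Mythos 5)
Your proposal is correct and follows exactly the route the paper intends: the paper's entire proof is the remark that the lemma ``follows almost immediately from Lemma~\ref{23-10-20g}'', and you have simply written out the details — positivity of the two evaluation homomorphisms from the definition of $G^+$, non-vanishing on $[[1]]$, and $\gamma$-invariance of $\tau'_t$ via the index shift in \eqref{23-10-20f} together with the bijectivity of the correspondence in Lemma~\ref{23-10-20g}. Nothing to correct.
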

\begin{proof} This follows almost immediately from Lemma \ref{23-10-20g}.
\end{proof}

\begin{lemma}\label{09-10-20a} Set
$$
G^{++} = \{0\} \cup \left\{ x \in G : \ \sum_{n \in \mathbb Z} \alpha^n(x_n)(t) > 0 \ \forall t \in F\right\} \ .
$$
Then $K_0(C)^+ =  {G^{++}}/\left(\id_G - \gamma_*\right)(G)$ and 
$S$ takes $K_0(C)^+$ onto $G_F^+$.
\end{lemma}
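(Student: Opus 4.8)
The plan is to reduce both conclusions to the single identity $S(K_0(C)^+) = G_F^+$ and then prove two inclusions. First note that $S(q((x_n))) = \sum_{n} \alpha^n(x_n)$ and that every element of $G_0$ arises in this way (take $(x_n) = [[a]]$); hence the inequality defining $G^{++}$ is exactly the statement that $S(q((x_n))) \in G_F^+$, so that $G^{++}/(\id_G-\gamma_*)(G) = S^{-1}(G_F^+)$. Consequently, once $S(K_0(C)^+)=G_F^+$ is established, both assertions of the lemma follow at once: $K_0(C)^+ = S^{-1}(G_F^+) = G^{++}/(\id_G-\gamma_*)(G)$, and $S$ carries $K_0(C)^+$ onto $G_F^+$.

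For the inclusion $S(K_0(C)^+) \subseteq G_F^+$ I would feed the traces of Lemma \ref{09-10-20} into the crossed product. Fix $t \in F$ and let $\tau'_t$ be the $\gamma$-invariant trace on $B_F$ with $(\tau'_t)_*((x_n)) = \sum_n (t/(1-t))^n x_n(t)$. By Corollary \ref{28-10-20a} the functional $\mu_t := \tau'_t \circ P$ is a trace on $C$; since $B_F \subseteq C$ induces $q$ on $K_0$ and $\mu_t|_{B_F} = \tau'_t$, we get $(\mu_t)_*(g) = S(g)(t)$ for every $g \in K_0(C)$. As $C$ is simple, the non-zero lower semi-continuous trace $\mu_t$ is faithful, so for a non-zero $g = [p] \in K_0(C)^+$ (with $p$ a projection, hence in the Pedersen ideal, so that $\mu_t(p) < \infty$) we obtain $S(g)(t) = \mu_t(p) > 0$. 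Letting $t$ range over $F$ gives $S(g) \in G_F^+$.

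For the reverse inclusion I would use that $G^+/(\id_G-\gamma_*)(G) = q(G^+) \subseteq K_0(C)^+$, which was noted before the lemma; it therefore suffices to prove $G_F^+ \subseteq S(q(G^+))$. Given $a \in G_F^+ \setminus \{0\}$ I look for $(x_n) \in G^+$ supported on $\{0,1\}$ with $\sum_n \alpha^n(x_n) = a$. Setting $x_1 = v$ and $x_0 = a - \alpha(v)$ for a $v \in G_0$ to be chosen makes $\sum_n \alpha^n(x_n) = a$, so the first defining condition of $G^+$ reads $a(t) > 0$ on $F$ and holds, while the second becomes
\[
a(t) + \frac{1-2t}{1-t}\,v(t) > 0 \qquad (t \in F_1).
\]
When \eqref{08-11-20} holds, $t \mapsto \frac{1-2t}{1-t}$ is non-vanishing on $F_1$, so by the first part of Lemma \ref{22-10-20a} the functions $\frac{1-2t}{1-t}v$ with $v \in \mathbb Z[t]$ are dense in $C_{\mathbb R}(F_1)$, and approximating a sufficiently large positive constant yields the inequality. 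When \eqref{08-11-20a} holds, $1/2 \in F$ forces $a(1/2) > 0$, so the inequality at $t = 1/2$ is automatic because $\frac{1-2t}{1-t}$ vanishes there; away from $1/2$ I choose a continuous function vanishing at $1/2$ and dominating $-a$ on $F_1$ (for instance a large multiple of $|t-1/2|$) and approximate it by functions $\frac{1-2t}{1-t}v$ using the second part of Lemma \ref{22-10-20a}. In either case the resulting $(x_n)$ lies in $G^+$ and satisfies $S(q((x_n))) = a$, so $a \in S(q(G^+)) \subseteq S(K_0(C)^+)$.

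The bookkeeping of the first two paragraphs is routine; the genuine obstacle is the interpolation in the last paragraph, and specifically the case \eqref{08-11-20a}, where the coefficient $\frac{1-2t}{1-t}$ degenerates at $1/2 \in F_1$ and one must use both the strict positivity $a(1/2)>0$ (coming from $1/2 \in F$) and the restricted density statement of Lemma \ref{22-10-20a} for functions vanishing at $1/2$. The standing hypotheses on $1/2$ are exactly what make the displayed inequality solvable.
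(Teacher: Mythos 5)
Your proof is correct and follows essentially the same route as the paper: one inclusion via the $\gamma$-invariant traces $\tau'_t$ extended to $C=B_F\rtimes_\gamma\mathbb Z$ by the conditional expectation and the simplicity of $C$, and the other via a perturbation by a multiple of $(2t-1)$ times a unit, using the density statements of Lemma \ref{22-10-20a} and the standing hypothesis on $1/2$ in exactly the way the paper does. The only differences are cosmetic: you reduce everything to the single identity $S(K_0(C)^+)=G_F^+$ using the injectivity of $S$ (so you only perturb elements of the form $[[a]]$ rather than general elements of $G^{++}$), and you support the perturbation on coordinates $\{0,1\}$, giving the coefficient $\tfrac{1-2t}{1-t}$ in place of the paper's $\tfrac{1-2t}{t(1-t)}$ at coordinates $\{-1,0,1\}$.
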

\begin{proof} Let $x = (x_n)_{n\in \mathbb Z} \in G^{++}$. It follows from Lemma \ref{22-10-20a} that we can choose $b \in \mathbb Z[t]$ such that 
$$
\frac{1-2t}{t(1-t)}b(t) + \sum_{n \in \mathbb Z} x_n(t) \ > \ 0 \ \forall t \in F_1 \ ;
$$
albeit the arguments for this depend on whether \eqref{08-11-20} or \eqref{08-11-20a} holds. In particular, it is important here that when $F_1$ contains $1/2$ then so does $F$, because then $ \sum_{n \in \mathbb Z} x_n(1/2) = \sum_{n \in \mathbb Z} \alpha^n(x_n)(1/2) > 0$ since $x \in G^{++}$.
Set
$$
y_n = \begin{cases} x_n, & \ n \notin \{1,-1\} \\ x_1 + \alpha^{-1}(b), & \ n = 1, \\ x_{-1} - \alpha(b) , & \ n = -1 \ , \end{cases}
$$
and $y = (y_n)_{n \in \mathbb Z} \in G$. Then $\sum_{n \in \mathbb Z} \alpha^n(y_n) =  \sum_{n \in \mathbb Z} \alpha^n(x_n)$ and 
$$
\sum_{n \in \mathbb Z} y_n(t)  \ =  \ \frac{1-2t}{t(1-t)}b(t) + \sum_{n \in \mathbb Z} x_n(t) \ > \ 0 
$$ 
for all $t \in F_1$. Thus $x-y \in (\id_G - \gamma_*)(G)$ by Lemma \ref{27-10-20d} and $y \in G^+$. This shows that
$$
{G^{++}}/\left(\id_G - \gamma_*\right)(G) \subseteq {G^{+}}/\left(\id_G - \gamma_*\right)(G) \subseteq K_0(C)^+ \ .
$$
 Let $x \in K_0(C)^+\backslash \{0\} \subseteq G/\left(\id_G - \gamma_*\right)(G)$ and choose $y \in G$ such that $q(y) = x$. Set $z = [[ \sum_{n \in \mathbb Z} \alpha^n(y_n)]] \in G$ and note that $q(z) =q(y) = x$ by Lemma \ref{27-10-20d}. Let $t \in F$. The trace $\tau'_t$ from Lemma \ref{09-10-20} is $\gamma$-invariant and there is therefore a trace $\tau''_t$ on $C$ such that $\tau''_t|_{B_F} = \tau'_t$. Then ${\tau''_t}_*(x) > 0$ since $x > 0 $ and $C$ is simple. Hence
$$
0 <{\tau''_t}_*(x) = {\tau'_t}_*(z) = \ \sum_{n \in \mathbb Z} \alpha^n(y_n)(t) \ .
$$
Thus $z \in G^{++}$ and $x \in {G^{++}}/\left(\id_G - \gamma_*\right)(G)$.  We conclude that 
$K_0(C)^+ =  {G^{++}}/\left(\id_G - \gamma_*\right)(G)$ and $S(K_0(C)^+) \subseteq G_F^+$. If $a \in G_F^+$ set $x = [[a]]$. Then $x \in G^{++}$, $q(x) \in K_0(C)^+$ and $S(q(x)) = a$.
\end{proof}

\begin{lemma}\label{30-10-20} Let $\phi : G \to \mathbb R$ be a non-zero positive homomorphism and $s > 0$ a positive number such that $\phi \circ \gamma_* = s\phi$. When $s =1$ there is a unique bounded Borel measure $\mu$ on $F$ such that
\begin{equation}\label{30-10-20a}
\phi\left((x_n)_{n \in \mathbb Z}\right) \ = \ \int_{F} \sum_{n \in \mathbb Z} \alpha^n(x_n)(t) \ \mathrm{d} \mu(t) 
\end{equation}
for all $(x_n)_{n \in \mathbb Z}\in G$, and when $s \neq 1$ it follows that $\frac{s}{1+s}  \in F_1$ and there is an $r > 0$ such that
$$
\phi\left((x_n)_{n \in \mathbb Z}\right) \ = \ r \sum_{n \in \mathbb Z} x_n\left(\frac{s}{1+s}\right)  \ 
$$
for all $(x_n)_{n \in \mathbb Z}\in G$.
\end{lemma}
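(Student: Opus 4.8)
For $a\in G_0$ and $n\in\mathbb Z$ write $j_n(a)\in G$ for the element whose $n$-th coordinate is $a$ and whose remaining coordinates vanish, so that $j_0(a)=[[a]]$. From \eqref{23-10-20f} one has $\gamma_*(j_n(a))=j_{n-1}(\alpha(a))$, hence $\gamma_*^n(j_n(a))=[[\alpha^n(a)]]$. Setting $\lambda(a):=\phi([[a]])$ and using $\phi\circ\gamma_*^n=s^n\phi$, this gives $\phi(j_n(a))=s^{-n}\lambda(\alpha^n(a))$, so that for every finitely supported $x=(x_n)_{n\in\mathbb Z}\in G$
\[
\phi(x)=\sum_{n\in\mathbb Z}s^{-n}\lambda(\alpha^n(x_n)).
\]
Since $[[a]]\in G^+$ exactly when $a>0$ on $F\cup F_1$, the homomorphism $\lambda:G_0\to\mathbb R$ is positive for strict positivity on $F\cup F_1$; comparing with the order unit $[[1]]$ gives $|\lambda(a)|\le\lambda(1)\cdot\sup_{F\cup F_1}|a|$, and by the first item of Lemma \ref{22-10-20a} (applied to $F'=F\cup F_1$) $\lambda$ extends to a bounded positive functional on $C_\mathbb R(F\cup F_1)$. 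By the Riesz representation theorem there is a bounded positive Borel measure $\rho$ on $F\cup F_1$ with $\lambda(a)=\int a\,\mathrm d\rho$, and since $\alpha^n(x_n)(t)=\big(\tfrac{t}{1-t}\big)^n x_n(t)$ we obtain the basic formula
\[
\phi(x)=\int_{F\cup F_1}\ \sum_{n\in\mathbb Z}\Big(\tfrac{t}{(1-t)s}\Big)^n x_n(t)\ \mathrm d\rho(t).
\]

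When $s=1$ the map $\phi$ is $\gamma_*$-invariant, hence vanishes on $(\id_G-\gamma_*)(G)$ and descends to $\overline\phi$ on $K_0(C)=G/(\id_G-\gamma_*)(G)$. By the proof of Lemma \ref{09-10-20a} every $x\in G^{++}$ is congruent mod $(\id_G-\gamma_*)(G)$ to some $y\in G^+$, so $\overline\phi\ge0$ on $K_0(C)^+=G^{++}/(\id_G-\gamma_*)(G)$; transporting along the isomorphism $S$ of Lemma \ref{09-10-20a} onto $(G_0,G_F^+)$, the map $\overline\phi\circ S^{-1}$ is a positive homomorphism $G_0\to\mathbb R$ for positivity on $F$. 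As before it is bounded, and by Lemma \ref{22-10-20a} it is integration against a unique bounded positive Borel measure $\mu$ on $F$. Because $S(q(x))=\sum_n\alpha^n(x_n)$, this is exactly \eqref{30-10-20a}, with uniqueness of $\mu$ inherited from the Riesz measure.

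For $s\ne1$ put $t_0=\tfrac{s}{1+s}$, the unique $t\in\,]0,1[$ with $\tfrac{t}{1-t}=s$; the plan is to show $\rho=r\delta_{t_0}$ with $t_0\in F_1$. Reading the basic formula, the ``local functional'' $L_t(x)=\sum_n\big(\tfrac{t}{(1-t)s}\big)^n x_n(t)$ is the kernel of $\phi$ against $\rho$, and positivity of $\phi$ on $G^+$ forces $\rho$ to be concentrated on the set of $t$ at which $L_t$ is nonnegative on the local order cone — the cone cut out at $t$ by $\sum_n\big(\tfrac{t}{1-t}\big)^n x_n(t)\ge0$ (if $t\in F$) and by $\sum_n x_n(t)\ge0$ (if $t\in F_1$). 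By finite-dimensional LP duality $L_t$ lies in the corresponding dual cone iff $\big(\tfrac{t}{(1-t)s}\big)^n$ is a nonnegative combination of $\big(\tfrac{t}{1-t}\big)^n$ and the constant sequence $1$; as these two sequences are linearly independent (here $\tfrac{t}{1-t}\ne1$ off $t=\tfrac12$) and $s\ne1$, this occurs only when $\tfrac{t}{(1-t)s}=1$, i.e. $t=t_0$, and never for the pure $F$-condition (which would require $s^{-n}$ constant in $n$). Hence $\supp\rho\subseteq\{t_0\}\cap F_1$; since $\phi\ne0$ forces $\rho\ne0$, we conclude $t_0\in F_1$ and $\rho=r\delta_{t_0}$ with $r>0$. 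Substituting and using $s^{-n}\big(\tfrac{t_0}{1-t_0}\big)^n=1$ yields $\phi(x)=r\sum_n x_n(t_0)$, as required.

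The one step demanding real work is the passage from this pointwise sign computation to the support statement for $\rho$. Making it precise requires, for each $t_1\in F\cup F_1$ with $t_1\ne t_0$, constructing genuine elements $x\in G^+$ whose coordinates realize near $t_1$ a prescribed finitely supported sequence $(c_n)$ with $\sum_n\big(\tfrac{t_1}{1-t_1}\big)^n c_n>0$ and $\sum_n c_n>0$ but $L_{t_1}(c)<0$, while globally keeping $\sum_n\alpha^n(x_n)>0$ on $F$ and $\sum_n x_n>0$ on $F_1$ and controlling the contribution of $\rho$ away from $t_1$. I expect to carry out this localization with the density statements of Lemma \ref{22-10-20a}, treating $F\setminus F_1$, $F_1\setminus F$ and $F\cap F_1$ separately and using the slack obtained by adding small multiples of the order unit $[[1]]$; this is the main technical obstacle, the linear-algebraic input above being entirely elementary.
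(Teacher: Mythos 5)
Your reduction to the single functional $\lambda(a)=\phi([[a]])$ and the resulting formula $\phi(x)=\int_{F\cup F_1}\sum_n\bigl(\tfrac{t}{(1-t)s}\bigr)^n x_n(t)\,\mathrm d\rho(t)$ are correct, and your treatment of the case $s=1$ (descending to $K_0(C)^+=G^{++}/(\id_G-\gamma_*)(G)$ via Lemma \ref{09-10-20a} and transporting along $S$) is a legitimate, slightly different route to \eqref{30-10-20a}. But the case $s\neq 1$ --- which is the substance of the lemma, since it is what forces $\frac{s}{1+s}\in F_1$ --- is not proved. Everything hinges on the claim that positivity of $\phi$ on $G^+$ forces $\supp\rho$ into the set of $t$ where the kernel $L_t$ lies in the local dual cone. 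Your LP-duality computation of that set is fine, but the passage from ``$\int L_t(x)\,\mathrm d\rho\ge 0$ for all $x\in G^+$'' to a support statement for $\rho$ requires, for each $t_1\neq t_0$, exhibiting elements of $G^+$ whose kernel values are strongly negative near $t_1$ while the two global positivity constraints (on all of $F$ and all of $F_1$, with the coupling at $t=1/2$) are maintained and the contribution of $\rho$ elsewhere is controlled. You explicitly defer this as ``the main technical obstacle'' that you ``expect to carry out''; as written there is no proof of the key assertion, and the elementary linear algebra surrounding it does not substitute for it.

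For comparison, the paper avoids this localization problem entirely. It factors $\phi$ through $\Sigma:G\to C_\mathbb R(F_1)\oplus C_\mathbb R(F)$, $\Sigma(x)=\bigl(\sum_n x_n|_{F_1},\sum_n\alpha^n(x_n)|_F\bigr)$ (using the small positive elements $g_m$ to see that $\phi$ kills $\ker\Sigma$, and the density of $\{\Sigma([[-b,a,b]])\}$ to extend the induced functional), obtaining two measures $\mu_1$ on $F_1$ and $\mu_2$ on $F\setminus\{1/2\}$. The relation $\phi\circ\gamma_*=s\phi$ then becomes the measure identity \eqref{30-10-20c}, from which $\bigl(\tfrac{t}{1-t}-s\bigr)\mu_1=0$ and $\mu_2=0$ (for $s\neq1$) are immediate; concentration at $t_0=\frac{s}{1+s}$ and $t_0\in F_1$ follow with no construction of special positive elements. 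If you want to complete your argument along your own lines, the most economical repair is to test $\phi$ on the elements $[[-b,a,b]]$, which converts your single measure $\rho$ back into exactly the two-functional picture of the paper and lets you invoke the same eigenvalue argument; the ad hoc localization you propose would essentially be re-proving that factorization in a harder way.
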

\begin{proof} Define $\Sigma : G \to C_{\mathbb R}(F_1) \oplus C_\mathbb R(F)$ such that 
 $$
 \Sigma((x_n)_{n \in \mathbb Z}) = \left( \sum_{n\in \mathbb Z} x_n|_{F_1}, \ \sum_{n \in \mathbb Z} \alpha^n(x_n)|_{F}\right) \ .
 $$ 
We show first that $\phi$ factorises through $\Sigma$, i.e. there is a homomorphism $\phi': \Sigma(G) \to \mathbb R$ such that $\phi = \phi' \circ \Sigma$. To this end take $x \in G$ such that $\Sigma(x) =0$. It suffices to show that $\phi(x)=0$. Let $m \in \mathbb N$. It follows from the density of $\mathbb Z[t]$ in $C_\mathbb R( F \cup F_1)$ that there is a $g_m \in G_F^+$ such that $0<g_{m}(t) < \frac{1}{m}$ for all $t \in F \cup F_1$. Then $m [[g_m]] \leq [[1]]$ in $G$ and hence $0 \leq \phi([[g_m]]) \leq \frac{1}{m} \phi([[1]])$. Since $x \ + \ [[g_m]] \in G^+$ we also have that $0 \leq \phi(x) + \phi([[g_m]]) \leq \phi(x) \ + \  \frac{1}{m}\phi([[1]])$. Similarly, since $x \ - \ [[g_m]] \in - G^+$ we have that $\phi(x) \ - \ \frac{1}{m}\phi([[1]]) \ \leq \ \phi(x - [[g_m]]) \leq 0$. Letting $ m \to \infty$ we find that $\phi(x) = 0$ as desired. 
Consider an element $x =(x_n)_{n \in \mathbb Z} \in G$ and a positive rational number $L = \frac{k}{l} > 0$, where $k,l \in \mathbb N$, such that
$$
-L <  \sum_{n\in \mathbb Z} x_n(t) < L \ \forall t \in F_1
$$
and
$$
-L <\sum_{n \in \mathbb Z} \alpha^n(x_n)(t)  < L \ \forall t \in F \ .
$$
Then $-k [[1]] \leq l x \leq k[[1]]$ in $G$ and hence
$$
-L\phi([[1]]) \ \leq \ \phi'\left( \sum_{n\in \mathbb Z} x_n|_{F_1}, \ \sum_{n \in \mathbb Z} \alpha^n(x_n)|_{F}\right) \ \leq \ L\phi([[1]])\  .
$$
This shows that $\phi'$ is Lipschitz continuous for the supremum norm on $\Sigma(G)$. Set 
$$
X  = \left\{ (f,g) \in C_\mathbb R(F_1)\oplus C_\mathbb R(F) : f(1/2) = g(1/2) \right\} 
$$
when $1/2 \in F_1 \cap F$, in which case $\Sigma(G) \subseteq X$. Since
 $$
 \left\{(a,a+\frac{2t-1}{t(1-t)}b) : \ a,b \in G_0\right\} \ = \ \left\{ \Sigma([[-b,a,b]]): \ a,b \in G_0 \right\}
 $$
 is a dense additive subgroup of $C_\mathbb R(F_1) \oplus C_\mathbb R(F)$ when $1/2 \notin F_1 \cup F$ and of $X$ when $1/2 \in F_1 \cap F$, it follows that $\phi'$ extends by continuity to a linear map on $C_\mathbb R(F_1) \oplus C_\mathbb R(F)$ in the first case, and on $X$ in the second. Assume first that $1/2 \in F_1 \cap F$.  Applying the Riesz representation theorem to the positive functional 
$$
f \ \mapsto \ \phi'(f,f(1/2))
$$
on $C_\mathbb R(F_1)$ and the positive functional
$$
g \ \mapsto \ \phi'(0,g)
$$
on $\{g \in C_\mathbb R(F) : \ g(1/2) = 0\}$ we obtain bounded Borel measures $\mu_1$ on $F_1$ and $\mu_2$ on $F \backslash \{1/2\}$ such that 
$$
 \phi'(f,g) = \int_{F_1} f \ \mathrm{d}\mu_1 + \int_{F \backslash \{1/2\}} g - g(1/2) \ \mathrm d\mu_2 \ 
 $$
 for all $(f,g) \in X$. Note that $\Sigma \circ \gamma_* = \left(\alpha \times \id\right) \circ \Sigma$. Since $\phi \circ \gamma_* = s\phi_*$ it follows that
\begin{equation}\label{30-10-20c}
 \begin{split}
& \int_{F_1} \frac{t}{1-t}f(t) \ \mathrm{d}\mu_1(t) + \int_{F \backslash \{1/2\}} g(t) - g(1/2) \ \mathrm d\mu_2(t) \\
& \ \ \ \ \ \ \ \ \ \ = \ s\int_{F_1} f(t) \ \mathrm{d}\mu_1(t) + s\int_{F \backslash \{1/2\}} g(t) - g(1/2) \ \mathrm d\mu_2(t) 
\end{split} 
 \end{equation}
 for all $(f,g) \in X$. If $s=1$ it follows that $\mu_1$ is concentrated at $\frac{1}{2}$. Furthermore, since
 $$
g(1/2) \left(\mu_1(\{1/2\})    -  \mu_2(F \backslash \{1/2\})\right) + \int_{F \backslash \{1/2\}}g \ \mathrm d\mu_2  = \phi'(g(1/2), g) \geq 0
$$
when $g \geq 0$ in $C_\mathbb R(F)$, it follows that $\mu_1(\{1/2\}) \geq \mu_2(F \backslash \{1/2\})$. We can therefore define $\mu$ on $F$ such that 
$$
\mu(B) = \mu_2(B \backslash \{1/2\}) + \left(\mu_1(\{1/2\}) \ - \ \mu_2(F \backslash \{1/2\})\right) \mu_1(B \cap \{1/2\})
$$
 for every Borel set $B \subseteq F$. Then \eqref{30-10-20a} holds. Uniqueness of $\mu$ follows from the density of
 \begin{equation}\label{10-11-20f}
 \left\{ \sum_{n \in \mathbb Z} \alpha^n(x_n)|_F : \ (x_n)_{n \in \mathbb Z} \in G \right\}
\end{equation}
 in $C_\mathbb R(F)$.

 If instead $s \neq 1$ it follows from \eqref{30-10-20c} that $\mu_1$ is concentrated at a point $t' \in F_1$ such that $s = \frac{t'}{1-t'}$ while $\mu_2 = 0$. Since $\frac{s}{1+s} = t'$, this completes the proof when $\frac{1}{2} \in F_1 \cap F$. The case $\frac{1}{2} \notin (F_1 \cup F)$ is similar and slightly simpler; we leave it to the reader.  
   
\end{proof}

\begin{lemma}\label{05-11-20c} The cone of traces on $B_F \rtimes_\gamma \mathbb Z$ is in affine bijection with the cone of bounded Borel measures on $F$. The trace $\tau_m$ defined by a bounded Borel measure $m$ is given by the formula
\begin{equation}\label{09-11-20}
\tau_m (a) \ = \ \int_{F} \tau'_t(P(a)) \ \mathrm{d} m(t) \ ,
\end{equation}
where $\tau'_t$ is the $\gamma$-invariant trace on $B_F$ from Lemma \ref{09-10-20} and $P : B_F \rtimes_\gamma \mathbb Z \to B_F$ is the canonical conditional expectation.
\end{lemma}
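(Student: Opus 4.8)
The plan is to obtain the asserted affine bijection as the composition of three affine bijections that are already available, and then to verify the integration formula \eqref{09-11-20} by a computation on $K_0$.

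First I would reduce traces on $C=B_F\rtimes_\gamma\mathbb Z$ to traces on $B_F$. By the first of the three properties we arranged for $\gamma$ (the restriction $\mu\mapsto\mu|_{B_F}$ is a bijection from the traces on $C$ onto the $\gamma$-invariant traces on $B_F$), combined with \refcor{28-10-20a}, every trace on $C$ is automatically $\widehat\gamma$-invariant and equals $\tau_B\circ P$ for a unique $\gamma$-invariant trace $\tau_B=\mu|_{B_F}$ on $B_F$. This correspondence $\mu\leftrightarrow\tau_B$ is affine, and under it the formula to be proved reads $\tau_B(P(a))=\int_F\tau'_t(P(a))\,\mathrm dm(t)$, so it suffices to handle $B_F$ and then compose.

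Second I would move to positive homomorphisms. By \reflemma{23-10-20g} the traces on the AF algebra $B_F$ correspond affinely and bijectively to non-zero positive homomorphisms $K_0(B_F)=G\to\mathbb R$ via $\tau\mapsto\tau_*$, and $\tau$ is $\gamma$-invariant exactly when $\tau_*\circ\gamma_*=\tau_*$, i.e. when $\phi=\tau_*$ satisfies the hypothesis of \reflemma{30-10-20} with $s=1$. That lemma then identifies such $\phi$ with bounded Borel measures $m$ on $F$ through $\phi((x_n)_n)=\int_F\sum_n\alpha^n(x_n)(t)\,\mathrm dm(t)$, the measure being unique. Conversely this formula always defines a positive homomorphism, since $\sum_n\alpha^n(x_n)(t)>0$ on $F$ for $x\in G^+\setminus\{0\}$, it is $\gamma_*$-invariant because $\gamma_*$ shifts and applies $\alpha$ (see \eqref{23-10-20f}), and it is non-zero exactly when $m\neq 0$ (test on $[[1]]$). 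Sending $m=0$ to the zero trace, the composite of these three affine bijections gives the required affine bijection between the cone of traces on $C$ and the cone of bounded Borel measures on $F$.

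Finally I would check \eqref{09-11-20}. With $\tau_m=\tau_B\circ P$ as above, for any projection $e\in B_F$ the relation ${\tau'_t}_*([e])=\sum_n\alpha^n([e]_n)(t)$ from \reflemma{09-10-20} gives
\[
\tau_B(e)=(\tau_B)_*([e])=\int_F\sum_n\alpha^n([e]_n)(t)\,\mathrm dm(t)=\int_F\tau'_t(e)\,\mathrm dm(t),
\]
and hence $\tau_m(a)=\tau_B(P(a))=\int_F\tau'_t(P(a))\,\mathrm dm(t)$ once the equality is promoted from projections to all of $B_F$. I expect this promotion to be the main obstacle, precisely because the traces in play are unbounded weights on a stable algebra: one must verify that $b\mapsto\int_F\tau'_t(b)\,\mathrm dm(t)$ is a genuine densely defined lower semicontinuous trace. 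This rests on the observation that on each finite-dimensional subalgebra $\tau'_t$ is the evaluation at $t$ of a fixed element of $G_0$, a rational function with no poles in $F\subseteq\,]0,1[$, so $t\mapsto\tau'_t(b)$ is continuous on $F$ and integrable against the finite measure $m$; the resulting trace then agrees with $\tau_B$ on $K_0$, and the two coincide by the injectivity in \reflemma{23-10-20g}.
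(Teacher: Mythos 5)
Your proof is correct and follows essentially the same route as the paper: reduce to $\gamma$-invariant traces on $B_F$ via the first of the Additional properties and Corollary \ref{28-10-20a}, pass to positive homomorphisms on $K_0$ by Lemma \ref{23-10-20g}, apply the $s=1$ case of Lemma \ref{30-10-20}, and promote the resulting equality from projections to all of $B_F$ using Lemma \ref{23-10-20g} again. The only differences are cosmetic: you derive injectivity of $m \mapsto \tau_m$ from the uniqueness of the measure in Lemma \ref{30-10-20} rather than from the density of $\{t\mapsto\tau'_t(a)\}$ in $C_\mathbb R(F)$ (the same fact), and you are more explicit than the paper about why $b\mapsto\int_F\tau'_t(b)\,\mathrm{d}m(t)$ is a genuine densely defined lower semi-continuous trace.
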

\begin{proof} It follows from Corollary \ref{28-10-20a} that \eqref{09-11-20} defines a ($\widehat{\gamma}$-invariant) trace on $B_F \rtimes_\gamma \mathbb Z$. If $\tau$ is a trace on $B_F \rtimes_\gamma \mathbb Z$ the homomorphism $(\tau|_{B_F})_* : G \to \mathbb R$ is positive and $\gamma_*$-invariant and it follows from the first part of Lemma \ref{30-10-20} and Lemma \ref{09-10-20} that there is a bounded Borel measure $m$ on $F$ such that
$$
(\tau|_{B_F})_*[f] = \int_F \tau'_t(f) \ \mathrm{d} m(t) \ 
$$
for all projections $f \in B_F$. It follows then from Lemma \ref{23-10-20g} first that $\tau|_{B_F} = \tau_m|_{B_F}$ and then thanks to first of the Additional properties \ref{listref} that $\tau = \tau_m$. Hence the map $m \mapsto \tau_m$ is surjective. It is injective because the functions $t \mapsto \tau'_t(a), \ a \in B_F\rtimes_\gamma \mathbb Z$, are dense in $C_\mathbb R(F)$. Indeed, this collection of functions contains all the functions \eqref{10-11-20f}.
\end{proof}

The $0$-KMS weights for $\widehat{\gamma}$ are by definition the $\widehat{\gamma}$-invariant traces and by the first of the Additional properties \ref{listref} all traces on $B_F \rtimes_\gamma \mathbb Z$ are $\widehat{\gamma}$-invariant. Hence Lemma \ref{05-11-20c} gives also a description of all $0$-KMS weights for $\widehat{\gamma}$.

\begin{lemma}\label{05-11-20d} Let $\widehat{\gamma}$ be the dual action on $B_F \rtimes_\gamma \mathbb Z$ considered as a $2 \pi$-periodic flow. For $\beta \in \mathbb R\backslash \{0\}$ there is a $\beta$-KMS weight for $\widehat{\gamma}$ if and only if $\frac{e^{-\beta}}{1+e^{-\beta}} \in F_1$, in which case it is unique up to multiplication by scalars.
\end{lemma}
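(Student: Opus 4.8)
The plan is to chain together the structural lemmas already established and to reduce the whole statement to the dichotomy in \reflemma{30-10-20}. First I would apply \reflemma{26-10-20} to $B = B_F$ and $\gamma$: for each $\beta$ the map $\omega \mapsto \omega|_{B_F}$ is a bijection from the $\beta$-KMS weights for $\widehat{\gamma}$ onto the traces $\tau$ on $B_F$ with $\tau \circ \gamma = e^{-\beta}\tau$, and since this map is linear it respects multiplication by scalars. It therefore suffices to decide when such traces exist and to show that they are unique up to a positive scalar. As $B_F$ is an AF algebra, \reflemma{23-10-20g} identifies the traces $\tau$ on $B_F$ with the non-zero positive homomorphisms $\phi = \tau_* : G \to \mathbb R$, the identification being injective; and because $(\tau \circ \gamma)_* = \tau_* \circ \gamma_*$ while $(e^{-\beta}\tau)_* = e^{-\beta}\tau_*$, the relation $\tau \circ \gamma = e^{-\beta}\tau$ is equivalent to $\phi \circ \gamma_* = e^{-\beta}\phi$. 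Thus, up to scalars, the $\beta$-KMS weights correspond precisely to the non-zero positive homomorphisms $\phi : G \to \mathbb R$ satisfying $\phi \circ \gamma_* = s\phi$ with $s = e^{-\beta}$.

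With this translation in hand I would invoke \reflemma{30-10-20}. Since $\beta \neq 0$ we have $s = e^{-\beta} \neq 1$, so only the $s \neq 1$ branch of that lemma can apply: the existence of a non-zero positive $\phi$ with $\phi \circ \gamma_* = s\phi$ forces $\frac{s}{1+s} = \frac{e^{-\beta}}{1+e^{-\beta}} \in F_1$, and any such $\phi$ has the form $\phi((x_n)_{n}) = r \sum_{n} x_n(\frac{s}{1+s})$ for some $r > 0$. This yields the ``only if'' direction at once, and, since the sole remaining freedom is the positive scalar $r$, the asserted uniqueness up to scalars.

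For the converse I would exhibit $\phi$ directly. Suppose $t' = \frac{e^{-\beta}}{1+e^{-\beta}} \in F_1$, so that $\frac{t'}{1-t'} = e^{-\beta}$. By \reflemma{09-10-20} there is a trace $\tau_{t'}$ on $B_F$ with $(\tau_{t'})_*((x_n)_{n}) = \sum_{n} x_n(t')$. Using that $\gamma_*((x_n)_{n}) = (\alpha(x_{n+1}))_{n}$ and that $\alpha$ is multiplication by $\frac{t}{1-t}$, a one-line reindexing gives $(\tau_{t'})_* \circ \gamma_* = e^{-\beta}(\tau_{t'})_*$; hence $\tau_{t'} \circ \gamma = e^{-\beta}\tau_{t'}$ by \reflemma{23-10-20g}, and $\tau_{t'} \circ P$ is a $\beta$-KMS weight for $\widehat{\gamma}$ by \reflemma{26-10-20}. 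This settles the ``if'' direction.

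I do not anticipate a genuine obstacle: all of the analytic weight has already been absorbed into \reflemma{26-10-20} and \reflemma{30-10-20}, so the argument is in essence bookkeeping. The one point that warrants a moment's care is the equivalence between the trace-level identity $\tau \circ \gamma = e^{-\beta}\tau$ and its $K_0$-level shadow $\phi \circ \gamma_* = e^{-\beta}\phi$, which rests on the fact that a trace on an AF algebra is completely determined by the positive homomorphism it induces on $K_0$.
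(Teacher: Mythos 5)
Your argument is correct and follows essentially the same route as the paper: Lemma \ref{26-10-20} to pass to traces $\tau$ on $B_F$ with $\tau\circ\gamma = e^{-\beta}\tau$, Lemma \ref{23-10-20g} to translate to positive homomorphisms on $G$, the second part of Lemma \ref{30-10-20} for both the necessity of $\frac{e^{-\beta}}{1+e^{-\beta}}\in F_1$ and the uniqueness up to a positive scalar, and Lemma \ref{09-10-20} for the existence direction. No gaps.
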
 
\begin{proof} If $\frac{e^{-\beta}}{1+e^{-\beta}} \in F_1$ the trace $\tau_t$ on $B_F$ from Lemma \ref{09-10-20} with $t = \frac{e^{-\beta}}{1+e^{-\beta}}$ will have property that $\tau_t \circ \gamma = \frac{t}{1-t} \tau_t$. This follows from the formula defining $\tau_t$ and Lemma \ref{23-10-20g}. Since $\frac{t}{1-t} = e^{-\beta}$ it follows from Lemma \ref{26-10-20} that $\tau_t \circ P$ is a $\beta$-KMS weight for $\widehat{\gamma}$. If $\omega$ is a another $\beta$-KMS weight for $\widehat{\gamma}$ it follows from Lemma \ref{26-10-20} that $\omega|_{B_F}$ is a trace on $B_F$ such that $\omega|_{B_F} \circ \gamma = e^{-\beta} \omega|_{B_F}$. The second part of Lemma \ref{30-10-20} implies that ${\omega|_{B_F}}_* = r {\tau_t}_*$ for some $r > 0$ and then Lemma \ref{23-10-20g} and Lemma \ref{26-10-20} in combination show that $\omega = r \tau_t \circ P$, i.e. $\tau_t\circ P$ is the unique $\beta$-KMS weight up to multiplication by scalars. Conversely, if $\omega$ is a $\beta$-KMS weight for $\widehat{\gamma}$ the homomorphism ${\omega|_{B_F}}_* : G \to \mathbb R$ will be non-zero and have the property that ${\omega|_{B_F}}_* \circ \gamma_* = e^{-\beta}{\omega|_{B_F}}_*$. By Lemma \ref{30-10-20} this implies that $\frac{e^{-\beta}}{1+e^{-\beta}} \in F_1$ because we assume that $\beta\neq 0$.
\end{proof}

Let $e \in B_F$ be a projection such that $[e] = [[1]]$ in $G$.

\begin{lemma}\label{05-11-20a} Let $\widehat{\gamma}$ be the dual action on $ B_F \rtimes_\gamma \mathbb Z$ considered as a $2 \pi$-periodic flow. 
\begin{itemize} 
\item All trace states of $e(B_F \rtimes_\gamma \mathbb Z)e$ are invariant under the restriction of the dual action $\widehat{\gamma}$ and the tracial state space $T(e( B_F \rtimes_\gamma \mathbb Z)e)$ is affinely homeomorphic to the Bauer simplex of Borel probability measures on $F$.
\item For $\beta \neq 0$ there is a $\beta$-KMS state for the restriction of $\widehat{\gamma}$ to $e( B_F \rtimes_\gamma \mathbb Z)e$ if and only if $\frac{e^{-\beta}}{1+e^{-\beta}} \in F_1$, in which case it is unique.
\end{itemize}
\end{lemma}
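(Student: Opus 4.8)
The plan is to reduce both bullets to the corresponding statements for the stable algebra $C = B_F \rtimes_\gamma \mathbb Z$, which have already been established, by exploiting that $e$ is a \emph{full}, $\widehat\gamma$-invariant projection. Indeed $e \in B_F$ and the dual action fixes $B_F$ pointwise, so $\widehat\gamma_t(e) = e$ for all $t$ and $\widehat\gamma$ restricts to a $2\pi$-periodic flow on $eCe$; moreover $C$ is simple, so $e$ is full and $eCe$ is Morita equivalent to $C$. Since $e$ is a projection it lies in the Pedersen ideal of $C$, so every densely defined lower semicontinuous trace or KMS weight on $C$ takes a finite value on $e$, cf. \cite{Pe}. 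For each bullet I would set up a normalized restriction map from the densely defined traces, resp.\ $\beta$-KMS weights, on $C$ to the trace states, resp.\ $\beta$-KMS states, on $eCe$, and then invoke \reflemma{05-11-20c} and \reflemma{05-11-20d}.

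For the first bullet I would first recall that every trace on $C$ is $\widehat\gamma$-invariant: by the first of the Additional properties \ref{listref} the restriction $\mu \mapsto \mu|_{B_F}$ is a bijection onto the $\gamma$-invariant traces on $B_F$, and by \refcor{28-10-20a} the same holds for the $\widehat\gamma$-invariant traces, so injectivity of restriction forces every trace on $C$ to be $\widehat\gamma$-invariant. Since $e$ is full and $C$ is simple, restriction is an affine bijection between the densely defined traces on $C$ and the (automatically bounded) traces on the unital corner $eCe$, and a trace $\tau$ on $C$ restricts to a \emph{state} of $eCe$ exactly when $\tau(e) = 1$. The restriction of a $\widehat\gamma$-invariant trace to the invariant corner $eCe$ is $\widehat\gamma|_{eCe}$-invariant, which gives the invariance claim. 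Using \reflemma{05-11-20c} and the formula \eqref{09-11-20}, together with $P(e) = e$ and the computation ${\tau'_t}(e) = {\tau'_t}_*[e] = {\tau'_t}_*([[1]]) = 1$, I would evaluate $\tau_m(e) = \int_F \tau'_t(e)\ \mathrm{d} m(t) = m(F)$. Hence $\tau_m$ restricts to a state of $eCe$ precisely when $m$ is a probability measure, and $T(eCe)$ is affinely homeomorphic to the set of Borel probability measures on the compact set $F$, which is a Bauer simplex.

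For the second bullet I would invoke Theorem 2.4 in \cite{Th1}, which for a simple $C^*$-algebra relates the $\beta$-KMS states of the corner cut down by a $\widehat\gamma$-invariant projection $e$ to the $\beta$-KMS weights of $C$: a $\beta$-KMS weight $\omega$ on $C$ with $0 < \omega(e) < \infty$ yields the $\beta$-KMS state $\omega(e)^{-1}\omega|_{eCe}$, and every $\beta$-KMS state on $eCe$ arises in this way. Here $\omega(e) < \infty$ is automatic since $e$ is in the Pedersen ideal, while positivity is clean: by \reflemma{05-11-20d} every $\beta$-KMS weight is a positive scalar multiple of the explicit weight $\tau_t \circ P$ with $t = \frac{e^{-\beta}}{1+e^{-\beta}}$, and $\tau_t(e) = {\tau_t}_*([[1]]) = 1 > 0$, so $\omega(e) > 0$ always and the normalization is well defined. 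Therefore $eCe$ admits a $\beta$-KMS state if and only if $C$ admits a $\beta$-KMS weight, which by \reflemma{05-11-20d} happens exactly when $\frac{e^{-\beta}}{1+e^{-\beta}} \in F_1$; and since that weight is unique up to a positive scalar, the normalization $\omega(e) = 1$ singles out a unique $\beta$-KMS state on $eCe$.

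The main obstacle I anticipate is the careful bookkeeping in passing from the stable algebra $C$, whose traces and KMS functionals are genuinely unbounded weights, to the unital corner $eCe$: one must verify that the normalized restriction is a genuine bijection, that no trace or KMS state of $eCe$ is lost or doubled, and that the Pedersen-ideal finiteness and the positivity $\omega(e) > 0$ are exactly what make the normalization meaningful. Granting the external input of Theorem 2.4 in \cite{Th1} and the Morita-type identification of tracial cones, the remaining steps are the invariance argument and the two identities $\tau'_t(e) = 1$ and $\tau_t(e) = 1$, all of which are immediate once the $K_0$-bookkeeping of \reflemma{09-10-20} is in hand.
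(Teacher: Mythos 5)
Your proposal is correct and follows essentially the same route as the paper, which simply cites Proposition 4.7 of \cite{CP} together with Lemma \ref{05-11-20c} for the first bullet and Theorem 2.4 of \cite{Th2} (you cite the analogous Theorem 2.4 of \cite{Th1}, as the paper itself does in Remark \ref{11-11-20}) together with Lemma \ref{05-11-20d} for the second. The extra details you supply --- the $\widehat\gamma$-invariance of all traces on $C$ via Additional property (1) and Corollary \ref{28-10-20a}, and the normalizations $\tau'_t(e)=\tau_t(e)=1$ from $[e]=[[1]]$ --- are exactly the bookkeeping the paper leaves implicit.
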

\begin{proof} The first statement follows from Proposition 4.7 in \cite{CP} combined with Lemma \ref{05-11-20c} above, and the second statement follows from Theorem 2.4 in \cite{Th2} and Lemma \ref{05-11-20d} above.
\end{proof}

 Since $C$ is simple, $eCe$ is stably isomorphic to $C$ by Browns result, \cite{B}, and hence $\left(K_0(eCe),K_0(eCe)^+\right) = \left(K_0(C),K_0(C)^+\right)$. Let $p \in A_F$ be a projection representing the constant function $1 \in G_0$, i.e. $[p] = 1$ in $G_0$. It follows from Lemma \ref{09-10-20a} that $S$ induces an isomorphism 
\begin{equation}\label{28-10-20g}
\left(K_0(eCe),K_0(eCe)^+,[e]\right) \simeq (K_0(pA_Fp),K_0(pA_Fp)^+, [p]) \ 
\end{equation}
of partially ordered groups with order unit. Let $T(eCe)$ and $T(pA_Fp)$ denote the tracial state spaces of $eCe$ and $pA_Fp$, respectively. 

\begin{lemma}\label{27-10-20i} There is an affine homeomorphism $\lambda : T(eCe) \to T(pA_Fp)$ such that 
\begin{equation}\label{10-11-20}
\lambda(\tau)_*(S(x)) = \tau_*(x)
\end{equation} 
for all $\tau \in T(eCe)$ and $x \in K_0(eCe)$.
 \end{lemma}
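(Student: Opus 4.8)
The plan is to identify each of the two tracial state spaces with the state space of its ordered $K_0$-group with its order unit, and then to transport one onto the other through the isomorphism $S$ of \eqref{28-10-20g}. For a unital $C^*$-algebra $D$ write $\mathbb S(K_0(D),[1_D])$ for the set of positive homomorphisms $s:K_0(D)\to\mathbb R$ with $s([1_D])=1$, with the topology of pointwise convergence; the pairing $\tau\mapsto\tau_*$ sends $T(D)$ into $\mathbb S(K_0(D),[1_D])$. I would first show that for both $D=eCe$ and $D=pA_Fp$ this pairing is an affine homeomorphism onto the whole state space, and then define $\lambda$ as the composite
$$
\lambda:\ T(eCe)\ \xrightarrow{\ \tau\mapsto\tau_*\ }\ \mathbb S(K_0(eCe),[e])\ \xrightarrow{\ s\mapsto s\circ S^{-1}\ }\ \mathbb S(K_0(pA_Fp),[p])\ \xrightarrow{\ \sim\ }\ T(pA_Fp),
$$
where the last arrow is the inverse of the pairing for $pA_Fp$. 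Then $\lambda(\tau)_*=\tau_*\circ S^{-1}$, so $\lambda(\tau)_*(S(x))=\tau_*(x)$ holds by construction, which is exactly \eqref{10-11-20}.

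For $pA_Fp$, which is unital and simple AF, the statement that $\tau\mapsto\tau_*$ is an affine homeomorphism of $T(pA_Fp)$ onto $\mathbb S(K_0(pA_Fp),[p])$ is standard and follows directly from \reflemma{23-10-20g}: every trace on an AF algebra both determines and is determined by a non-zero positive homomorphism on $K_0$, and normalising at $[p]$ matches trace states with states of the ordered group. Since $K_0(pA_Fp)=G_0$ is countable, weak-$*$ convergence of traces corresponds to pointwise convergence of the associated homomorphisms, so the bijection is a homeomorphism.

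The substantive step is the corresponding statement for the corner $eCe$ of $C=B_F\rtimes_\gamma\mathbb Z$, which is not AF. Here I would use the explicit trace description obtained earlier. By \reflemma{05-11-20a} (via \reflemma{05-11-20c}) every $\tau\in T(eCe)$ extends uniquely to a densely defined lower semicontinuous trace $\tau_m$ on $C$ given by \eqref{09-11-20} for a bounded Borel measure $m$ on $F$, and the normalisation $\tau(e)=1$ forces $m$ to be a probability measure (since $S([e])=1$ gives $\tau_m(e)=m(F)$); this already yields the affine homeomorphism $T(eCe)\cong\mathrm{Prob}(F)$. The key point is to evaluate the pairing: for $x\in K_0(eCe)=K_0(C)$ pick a lift $\tilde x\in G$ with $q(\tilde x)=x$, and use naturality of the $K_0$-pairing under the inclusion $B_F\subseteq C$ together with $(\tau'_t)_*((x_n)_n)=\sum_n\alpha^n(x_n)(t)$ from \reflemma{09-10-20} to compute
$$
\tau_*(x)=(\tau_m|_{B_F})_*(\tilde x)=\int_F\sum_{n\in\mathbb Z}\alpha^n(\tilde x_n)(t)\ \mathrm d m(t)=\int_F S(x)(t)\ \mathrm d m(t),
$$
the last equality being the definition of $S$. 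Because $\{S(x)|_F:\,x\in K_0(C)\}=\{g|_F:\,g\in G_0\}$ is dense in $C_\mathbb R(F)$, the measure $m$ is recovered from $\tau_*$, so $\tau\mapsto\tau_*$ is injective; and since \reflemma{09-10-20a} identifies $(K_0(eCe),K_0(eCe)^+,[e])$ with $(G_0,G_F^+,1)$, whose states are exactly the integration functionals $g\mapsto\int_F g\,\mathrm dm$ against probability measures on $F$, the map is also surjective onto $\mathbb S(K_0(eCe),[e])$. Matching the two parametrisations by $\mathrm{Prob}(F)$ shows it is an affine homeomorphism.

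Assembling the three arrows then gives the desired $\lambda$, and the displayed computation shows that $\lambda(\tau)$ is precisely the trace on $pA_Fp$ attached to the same measure $m$, so \eqref{10-11-20} holds and $\lambda$ is an affine homeomorphism. I expect the main obstacle to be the third paragraph: one must be careful that a trace state on the corner $eCe$ extends to a genuinely densely defined trace on the stable algebra $C$, and that the identification $K_0(eCe)=K_0(C)$ used in the pairing computation is the canonical one induced by $eCe\subseteq C$. Once the formula $\tau_*(x)=\int_F S(x)\,\mathrm dm$ is in hand, the remaining verifications are routine bookkeeping with the already-established identifications.
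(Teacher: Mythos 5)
Your proof is correct and follows essentially the same route as the paper: both arguments use Proposition 4.7 of \cite{CP} to pass from trace states on the corner to densely defined traces on $C$, invoke Lemma \ref{30-10-20} (via Lemma \ref{05-11-20c} and the first of the Additional properties) to parametrize these by probability measures on $F$, use Lemma \ref{23-10-20g} together with the density of $G_0|_F$ in $C_{\mathbb R}(F)$ on the AF side, and define $\lambda$ so that \eqref{10-11-20} holds by construction. The only difference is bookkeeping: the paper routes through the $\gamma_*$-invariant positive homomorphisms on $G=K_0(B_F)$ (the sets $X_1$ and $X_2$) rather than through the state space of $K_0(eCe)$ itself, but the resulting map --- sending the trace attached to a measure $m$ on one side to the trace attached to the same $m$ on the other --- is identical.
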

 \begin{proof} By Proposition 4.7 in \cite{CP} we can identify $T(eCe)$ with the set of traces $\tau$ on $C$ with the property that $\tau(e) =1$, and similarly, $T(pA_Fp)$ with the set of traces $\tau'$ on $A_F$ with the property that $\tau'(p) =1$. Let $\Hom^+(G,\mathbb R)$ denote the set of positive homomorphisms $G \to \mathbb R$ and, similarly, $\Hom^+(G_0,\mathbb R)$ the set of positive homomorphisms $G_0 \to \mathbb R$. Thanks to the first of the Additional properties \ref{listref} combined with Lemma \ref{23-10-20g} we can identify, in the natural way, $T(eCe)$ with
 $$
 X_1 =\left\{ \phi \in \Hom^+(G,\mathbb R) : \ \phi \circ \gamma_* = \phi, \ \phi([[1]]) =1 \right\} \ ,
 $$
 and similarly, the set $T(pA_Fp)$ with the set
 $$
 X_2 = \left\{ \psi \in \Hom^+(G_0,\mathbb R) : \  \ \psi(1) =1 \right\} \ .
 $$
Using \eqref{27-10-20h} we see that we must produce an affine homeomorphism $\lambda : X_1 \to X_2$ such that
\begin{equation}\label{28-10-20e}
\lambda(\phi)(a) = \phi([[a]]) \ \ \ \forall a \in G_0 \ \forall \phi \in X_1 \ .
\end{equation}
To this end let $\phi \in X_1$. By Lemma \ref{30-10-20} there is a unique Borel probability  measure $\mu$ on $F$ such that \eqref{30-10-20a} holds. We define $\lambda(\phi) \in X_2$ such that
$$
\lambda(\phi)(a) = \int_{F} a(t) \ \mathrm{d} \mu \ 
$$
for $a \in G_0$. To construct an inverse to $\lambda$ note that every element $\psi$ of $X_2$ is given by integration with respect to a unique  Borel probability measure $\nu$ concentrated on $F$. We set
$$
\lambda^{-1}(\psi) \left((x_n)_{n \in \mathbb Z} \right) \ = \ \int_{F} \sum_{n \in \mathbb Z} \alpha^n(x_n)(t) \ \mathrm d\nu(t) \ 
$$
when $(x_n)_{n \in \mathbb Z}\in G$. Then $\lambda^{-1}(\psi) \in X_1$ and it is easy to see that $\lambda$ is a homeomorphism with inverse $\lambda^{-1}$. This completes the proof because \eqref{28-10-20e} clearly holds.

 \end{proof}

\begin{thm}\label{28-10-20f} $pA_Fp$ is $*$-isomorphic to $e( B_F \rtimes_\gamma \mathbb Z)e$.
\end{thm}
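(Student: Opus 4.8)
The plan is to deduce the isomorphism from the Elliott classification theorem for unital, simple, separable, nuclear, $\mathcal Z$-stable $C^*$-algebras satisfying the UCT, by verifying that both $pA_Fp$ and $e(B_F\rtimes_\gamma\mathbb Z)e$ belong to this class and then exhibiting an isomorphism of their Elliott invariants. The algebra $pA_Fp$ is a corner of the simple AF algebra $A_F$, hence itself a unital simple AF algebra, and therefore automatically satisfies all the structural hypotheses (in particular it is $\mathcal Z$-stable by \cite{CETWW}). For $eCe$ I would argue as follows: $C=B_F\rtimes_\gamma\mathbb Z$ is simple, and since it is the crossed product of the nuclear, bootstrap-class algebra $B_F$ by $\mathbb Z$ it is itself nuclear and lies in the UCT class; moreover it is $\mathcal Z$-stable by the third of the Additional properties \ref{listref}. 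As $eCe$ is stably isomorphic to $C$ by Brown's theorem \cite{B}, each of these properties descends to the unital corner $eCe$ (nuclearity and the UCT by Morita invariance, and $\mathcal Z$-stability by Corollary 3.2 in \cite{TW}). Thus both algebras are covered by the classification theorem.

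It then remains to assemble an isomorphism of Elliott invariants out of the data already produced. The $K_1$-groups both vanish: $K_1(pA_Fp)=0$ because $pA_Fp$ is AF, while $K_1(eCe)=K_1(C)=0$ since $eCe$ is stably isomorphic to $C$ and $K_1(C)=0$ was obtained from the Pimsner--Voiculescu sequence. The isomorphism \eqref{28-10-20g} induced by $S$ identifies the ordered $K_0$-groups with order unit, and the affine homeomorphism $\lambda:T(eCe)\to T(pA_Fp)$ of \reflemma{27-10-20i} identifies the tracial state spaces. Its defining property \eqref{10-11-20}, $\lambda(\tau)_*(S(x))=\tau_*(x)$, is precisely the statement that $S$ and $\lambda$ are compatible with the pairing between traces and $K_0$. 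Hence the pair $(S,\lambda)$ is an isomorphism of the full Elliott invariants of $eCe$ and $pA_Fp$, and the classification theorem yields the desired $*$-isomorphism $pA_Fp\simeq e(B_F\rtimes_\gamma\mathbb Z)e$.

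The conceptual work has already been carried out in the preceding lemmas, so I do not anticipate a serious obstacle. The one genuinely non-formal point is the compatibility of the $K_0$- and trace-space identifications with the trace pairing, and this is exactly what was arranged in \eqref{10-11-20}; everything else is a matter of checking membership in the classifiable class and citing the theorem. The only additional care needed is to confirm that the general $\mathcal Z$-stable classification theorem applies to the AF algebra $pA_Fp$ on the same footing as to $eCe$, so that a single isomorphism theorem governs both; since an AF algebra has vanishing $K_1$ and its Elliott invariant reduces to the ordered $K_0$ together with the automatically compatible trace data, this causes no difficulty.
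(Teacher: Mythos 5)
Your proposal is correct and follows essentially the same route as the paper: both proofs combine the order-unit-preserving $K_0$-isomorphism \eqref{28-10-20g} with the trace-space homeomorphism $\lambda$ of Lemma \ref{27-10-20i}, use \eqref{10-11-20} for compatibility with the pairing, and then invoke Corollary D of \cite{CETWW}, with $\mathcal Z$-stability of $pA_Fp$ coming from Theorem A and Corollary C of \cite{CETWW} and that of $eCe$ from the third of the Additional properties \ref{listref} together with Corollary 3.2 of \cite{TW}. Your discussion of nuclearity, the UCT and the vanishing of $K_1$ only makes explicit what the paper dismisses as well-known.
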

\begin{proof} The equality \eqref{10-11-20} shows that the homeomorphism $\lambda$ of Lemma \ref{27-10-20i} and the isomorphism \eqref{28-10-20g} combine to give an isomorphism of the Elliott invariants of $eCe$ and $pA_Fp$, and hence it remains only to show that the algebras $eCe$ and $pA_Fp$ both belong to a class of $C^*$-algebras for which the Elliott invariant is known to be complete. For this we appeal to Corollary D of \cite{CETWW} and we must therefore check that both algebras are separable, unital, nuclear, $\mathcal Z$-stable and satisfy the UCT. Only $\mathcal Z$-stability is not well-known. For $pA_Fp$ this follows from Theorem A and Corollary C in \cite{CETWW} and for $eCe$ it follows from the third of the Additional properties \ref{listref} combined with Corollary 3.2 in \cite{TW}. 
\end{proof}

\begin{cor}\label{08-11-20b} $ B_F \rtimes_\gamma \mathbb Z$ is $*$-isomorphic to $A_F$.
\end{cor}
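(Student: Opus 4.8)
The plan is to deduce the isomorphism from Theorem \ref{28-10-20f} by a stabilization argument, using that both $A_F$ and $C = B_F \rtimes_\gamma \mathbb Z$ are stable, together with Brown's theorem on full hereditary subalgebras \cite{B}. The whole content of the corollary is that the corner isomorphism of Theorem \ref{28-10-20f} can be ``undone'' at the level of the full algebras, which is automatic once both ambient algebras are stable.

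First I would collect the two stability facts that are already in place: $A_F$ is stable by construction (stated right after Theorem \ref{08-11-20d}), and $C$ is stable by the second of the Additional properties \ref{listref}. Next, since $A_F$ is simple, the corner $pA_Fp$ is a full hereditary subalgebra of $A_F$, so Brown's theorem gives $pA_Fp \otimes \mathbb K \simeq A_F \otimes \mathbb K$; because $A_F$ is stable this reads $pA_Fp \otimes \mathbb K \simeq A_F$. Symmetrically, $C$ is simple (as noted immediately after the definition $C = B_F\rtimes_\gamma\mathbb Z$), so $eCe$ is a full corner and $eCe \otimes \mathbb K \simeq C \otimes \mathbb K \simeq C$; this stable isomorphism was in fact already invoked earlier in the text when identifying $K_0(eCe)$ with $K_0(C)$.

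Finally I would chain these together. Theorem \ref{28-10-20f} gives $pA_Fp \simeq e(B_F\rtimes_\gamma\mathbb Z)e = eCe$, hence $pA_Fp \otimes \mathbb K \simeq eCe \otimes \mathbb K$. Combining the displayed identifications,
$$
A_F \ \simeq \ pA_Fp \otimes \mathbb K \ \simeq \ eCe \otimes \mathbb K \ \simeq \ C \ = \ B_F \rtimes_\gamma \mathbb Z \ ,
$$
which is the assertion.

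There is no real obstacle here: the argument is a routine application of Brown's stabilization theorem. The only facts that must be in hand — that both ambient algebras are stable, and that the two corners $pA_Fp$ and $eCe$ are full (which is guaranteed by simplicity of $A_F$ and of $C$) — are all already established in the excerpt, so the proof amounts to assembling them in the correct order.
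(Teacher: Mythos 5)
Your argument is correct and is essentially the proof the paper gives: stability of $A_F$ and of $C$, simplicity (hence fullness of the corners $pA_Fp$ and $eCe$), Brown's theorem, and Theorem \ref{28-10-20f} chained together in exactly this way. You have merely written out the stabilization step that the paper leaves implicit.
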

\begin{proof} Both $C^*$-algebras are stable; $B_F$ because its dimension scale is the entire positive semi-group and $C$ by the second of the Additional properties \ref{listref}. Both algebras are also simple and the conclusion follows therefore from Theorem \ref{28-10-20f} and \cite{B}.
\end{proof}

By combining Theorem \ref{28-10-20f} with Lemma \ref{05-11-20a} we get the following

\begin{cor}\label{05-11-20g} $pA_Fp$ is a simple unital AF algebra with a tracial state space affinely homeomorphic to the Bauer simplex of Borel probability measures on $F$, and there is a $2\pi$-periodic flow $\theta$ on $pA_Fp$ such that for $\beta \neq 0$ there is a $\beta$-KMS state for $\theta$ if and only if $\frac{e^{-\beta}}{1+e^{-\beta}} \in F_1$, in which case it is unique. All trace states of $pA_Fp$ are $\theta$-invariant and hence the set of $0$-KMS state for $\theta$ is affinely homeomorphic to the Bauer simplex of Borel probability measures on $F$.
\end{cor}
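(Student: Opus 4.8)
The plan is to transport the statement from the corner $eCe = e(B_F \rtimes_\gamma \mathbb Z)e$, where everything has already been established in Lemma \ref{05-11-20a}, across the $*$-isomorphism provided by Theorem \ref{28-10-20f}. First I would record the elementary structural facts: since $A_F$ is a simple AF algebra and $p$ is a non-zero projection, the corner $pA_Fp$ is a unital (with unit $p$) simple AF algebra, as corners of simple AF algebras are again simple and AF. Its tracial state space will be handled together with the KMS structure via the isomorphism below.

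Next I would set up the flow. The key observation is that $e \in B_F$, and the dual action $\widehat{\gamma}$ fixes $B_F$ pointwise (one has $\widehat{\gamma}_t(bu^n) = e^{int}bu^n$ for $b \in B_F$), so $e$ is $\widehat{\gamma}$-invariant and the restriction of $\widehat{\gamma}$ to $eCe$ is a genuine $2\pi$-periodic flow. Writing $\Phi \colon pA_Fp \to eCe$ for the $*$-isomorphism of Theorem \ref{28-10-20f}, I would define the desired flow by $\theta_t = \Phi^{-1} \circ \widehat{\gamma}_t \circ \Phi$, which is automatically $2\pi$-periodic. Because conjugation by a $*$-isomorphism that intertwines two flows carries $\beta$-KMS states to $\beta$-KMS states and $\theta$-invariant trace states to invariant trace states, bijectively and affinely, every assertion about $\theta$ reduces to the corresponding assertion about the restriction of $\widehat{\gamma}$ to $eCe$.

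It then remains only to read off the conclusions. The $\beta$-KMS criterion and uniqueness for $\beta \neq 0$ come directly from the second bullet of Lemma \ref{05-11-20a}, transported through $\omega \mapsto \omega \circ \Phi$; the first bullet of Lemma \ref{05-11-20a} gives both the affine homeomorphism of $T(eCe)$, hence of $T(pA_Fp)$, with the Bauer simplex of Borel probability measures on $F$, and the fact that all trace states are invariant under the flow. Finally, since the $0$-KMS states for $\theta$ are by definition exactly the $\theta$-invariant trace states, and here every trace state is invariant, the set of $0$-KMS states coincides with all of $T(pA_Fp)$, which the isomorphism identifies with the stated Bauer simplex.

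As for the main difficulty, there is essentially none of substance remaining: all the analytic and classification input has already been absorbed into Theorem \ref{28-10-20f}, the measure-theoretic analysis of Lemma \ref{30-10-20}, and the KMS computation of Lemma \ref{05-11-20d}. The only points needing a line of care are the verification that $\widehat{\gamma}$ genuinely restricts to $eCe$ as a flow, which follows from the $\widehat{\gamma}$-invariance of the projection $e \in B_F$, and the routine compatibility of the transported KMS and trace data with the flows; both are immediate.
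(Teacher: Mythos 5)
Your proposal is correct and follows exactly the route the paper takes: the paper derives this corollary by combining Theorem \ref{28-10-20f} with Lemma \ref{05-11-20a}, i.e.\ by transporting the KMS and trace data for the restriction of $\widehat{\gamma}$ to $e(B_F\rtimes_\gamma\mathbb Z)e$ across the $*$-isomorphism with $pA_Fp$, which is precisely your argument. The details you supply (the $\widehat{\gamma}$-invariance of $e$, the conjugated flow, and the compatibility of KMS states and invariant traces under the intertwining isomorphism) are the ones the paper leaves implicit.
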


Theorem \ref{08-11-20e} follows from Corollary \ref{05-11-20g}. To obtain Theorem \ref{08-11-20d}, note that $A_F \simeq pA_Fp \otimes \mathbb K$ by Browns result, \cite{B}, since $A_F$ is stable and simple. If $\theta$ is a flow on $pA_Fp$ with the property specified in Theorem \ref{08-11-20e}, the flow on $A_F$ which is conjugate to $\theta \otimes \id_{\mathbb K}$ under the isomorphism $A_F \simeq pA_Fp \otimes \mathbb K$ will have the property specified in Theorem \ref{08-11-20d}.

\section{Proof of the Corollary \ref{27-10-20} and Corollary \ref{27-10-20a}}\label{xxx}

We will combine Corollary \ref{08-11-20f} with the methods used in Appendix 12 of \cite{Th2} and Section 6 of \cite{Th3}. Given the collection $\mathbb I$ of intervals and simplexes $S_I, I \in \mathbb I$, in Corollary \ref{27-10-20} we get from Corollary 5.7 in \cite{Th3} a generalized gauge action $\alpha$ on $U$ with the following properties:
\begin{itemize}
\item For each $I \in \mathbb I$ and each $\beta \in I\backslash \{0\}$ there is a closed face $F_I$ in $S^{\alpha}_{\beta}$ which is strongly affinely isomorphic to $S_I$. 
\item For each $\beta \neq 0$ and each $\beta$-KMS state $\omega \in S^{\alpha}_{\beta}$ there is a unique norm-convergent decomposition
$$
\omega = \sum_{I \in \mathbb I_{\beta}} \omega_I \ 
$$
where $\omega_I \in \mathbb R^+F_I$. 
\end{itemize}
That $\alpha$ is a generalized gauge action means that there is a Bratteli diagram $\Br$ and a map $F : \Br_{Ar} \to \mathbb R$ defined on the set $\Br_{Ar}$ of arrows in $\Br$ which define a flow on $U$ in the following way. Let $\mathcal P_n$ denote the set of paths of length $n$ in $\Br$ starting at the top vertex. Extend the map $F$ to $\mathcal P_n$ such that
$$
F(\mu) = \sum_{i=1}^n F(a_i) \ ,
$$
when $\mu = a_1a_2\cdots a_n$ is made up of the arrows $a_i$. The Bratteli diagram $\Br$ is chosen such that  there are finite dimensional $C^*$-subalgebras $\mathbb F_n$ in $U$ spanned by a set of matrix units $E^n_{\mu,\mu'}, \ \mu,\mu' \in \mathcal P_n$, such that $\mathbb F_n \subseteq \mathbb F_{n+1}$ for all $n$ and
$$
U = \overline{\bigcup_n \mathbb F_n} \ . \
$$
The flow $\alpha$ is defined such that
$$
\alpha_t\left( E^n_{\mu,\mu'}\right) = e^{i (F(\mu)-F(\mu'))t} E^n_{\mu,\mu'} \ .
$$
See \cite{Th3}. In order to ensure that $\alpha$ is $2\pi$-periodic it is necessary to arrange that $F$ only takes integer values. That this is possible follows by inspection of the proof in \cite{Th3}; in fact, the only step in the proof where this is not automatic is in the proof of Lemma 5.3 in \cite{Th3}, where some real numbers $t_k$ are chosen. The only crucial property of these numbers is that they must be sufficiently big and they may therefore be chosen to be natural numbers. The resulting potential $F$ will then be integer-valued. Let $\theta^K$ be the flow from Corollary \ref{08-11-20f} and set
$$
\theta'_t = \alpha_t \otimes \theta^K_t \ ,
$$
which we will argue defines a flow $\theta'$ on $U \otimes pA_{\{1/2\}}p$ with the properties stated in Corollary \ref{27-10-20}. Since a $\beta$-KMS state for $\theta'$ will restrict to a $\beta$-KMS state for $\theta^K$ on the tensor factor $ pA_{\{1/2\}}p$, it follows from  Corollary \ref{08-11-20f} that there are no $\beta$-KMS states for $\theta'$ unless $\beta \in K \cup \{0\}$. Let $\beta \in K \backslash \{0\}$ and note that there is a unique $\beta$-KMS state $\omega_\beta$ for $\theta^K$. It remains only to show that the map $\omega \ \mapsto \ \omega \otimes \omega_\beta$ is an affine homeomorphism from $S^\alpha_\beta$ onto $S^{\theta'}_\beta$. Note that only the surjectivity is not obvious, and consider therefore a $\beta$-KMS state $\psi \in S^{\theta'}_\beta$. Let $x,y \in U, \ b \in  
pA_{\{1/2\}}p$ such that $x$ is $\alpha$-analytic. Then $x \otimes p$ is $\theta'$-analytic and $\theta'_{i \beta}(x \otimes p) = \alpha_{i\beta} (x) \otimes p$. Hence
\begin{align*}
&\psi(xy \otimes b) = \psi((x \otimes p)(y \otimes b)) = \psi((y\otimes b) \theta'_{i \beta}(x\otimes p)) \\
& = \psi((y\otimes b) (\alpha_{i \beta}(x) \otimes p)) = \psi(y\alpha_{i \beta}(x) \otimes b) \ .
\end{align*}
Thus, if $b \geq 0$ and $b \neq 0$, the map $U \ni x \mapsto \psi(x \otimes b)$ is a $\beta$-KMS functional for $\alpha$, and hence
\begin{align*}
&\psi(E^n_{\mu, \mu'} \otimes b) = \psi( \alpha_{i\beta}(E^n_{\mu, \mu'})\otimes b)) =  e^{-\beta (F(\mu) - F(\mu'))} \psi(E^n_{\mu, \mu'} \otimes b) \ .
\end{align*}
Since $\beta \neq 0$ it follows that $\psi(E^n_{\mu,\mu'} \otimes b) \ = \  0$ when $F(\mu) \neq F( \mu')$, and hence that $\psi$ factorises through the map $Q \otimes \id_{pA_{\{1/2\}}p}$, where $Q : U \to U^{\alpha}$ is the conditional expectation onto the fixed point algebra $U^\alpha$ of $\alpha$. Let $a \geq 0$ in $U$. Then
$$
pA_{\{1/2\}}p \ni b \ \mapsto \ \psi(a \otimes b) = \psi(Q(a) \otimes b)
$$
is a $\beta$-KMS functional for $\theta^K$ since $\psi$ is a $\beta$-KMS state for $\theta'$ and $Q(a)$ is fixed by $\alpha$. The uniqueness of $\omega_\beta$ implies therefore that
$$
\psi(a \otimes b) = \lambda(a)\omega_{\beta}(b)
$$
for some $\lambda(a) \in \mathbb R$ and all $b \in pA_{\{1/2\}}p$. It is then straightforward to show that $\lambda(a)$ can be defined for all $a \in U$, resulting in a $\beta$-KMS state $\lambda$ for $\alpha$. This completes the proof of Corollary \ref{27-10-20}. 

 Corollary \ref{27-10-20a} follows by choosing the family $\mathbb I$ of intervals in Corollary \ref{27-10-20} in an appropriate way, e.g. as the collection of all bounded intervals with rational endpoint, and also the Choquet simplexes $S_I$ in an appropriate way. See Section 6 in \cite{Th3}.


\begin{thebibliography}{WWWWW} 






\bibitem[BEH]{BEH} O. Bratteli, G. Elliott and R.H. Herman, {\em On the possible temperatures of a dynamical system}, Comm. Math. Phys. {\bf 74} (1980), 281-295.




\bibitem[BR]{BR} O. Bratteli and D.W. Robinson, {\em Operator Algebras
    and Quantum Statistical Mechanics I + II}, Texts and Monographs in
  Physics, Springer Verlag, New York, Heidelberg, Berlin, 1979 and 1981.
  
  
\bibitem[B]{B} L. Brown, {\em Stable isomorphism of hereditary subalgebras of $C^*$-algebras}, Pacific J. Math. {\bf 71} (1977), 335-348.  
  
 
    
     
\bibitem[CETWW]{CETWW} J. Castillejos, S. Evington, A. Tikuisis, 
S. White and W. Winter, {\em Nuclear dimension of simple $C^*$-algebras}, arXiv:1901.05853v3, Invent. Math., to appear. 

\bibitem[C]{C} F. Combes, {\em Poids associ\'e \`a une alg\`ebre
    hilbertienne \`a gauche}, Compos. Math. {\bf 23} (1971), 49--77.


\bibitem[CP]{CP} J. Cuntz and G. K. Pedersen, {\em Equivalence and traces on $C^*$-algebras}, J. Func. Analysis {\bf 33} (1979), 135-164.

\bibitem[EHS]{EHS} E. Effros, D. Handelman and C. L. Shen, {\em Dimension groups and their affine representations}, Amer. J. Math. {\bf 102} (1980), 385-407.


\bibitem[E]{E} G. Elliott, {\em On the classification of inductive limits of sequences of semisimple finite-dimensional algebras}, J. Algebra {\bf 38} (1976), 29-44.


\bibitem[JS]{JS} X. Jiang and H. Su, {\em On a simple unital projectionless $C^*$-algebra}, Amer. J. Math. {\bf 121} (2000), 359-413.

\bibitem[Ki1]{Ki1} A. Kishimoto, {\em Outer Automorphisms and Reduced Crossed Products
of Simple $C^*$-Algebras}, Comm. Math. Phys. {\bf 81} (1981), 429--435.


 \bibitem[Ki2]{Ki2} A. Kishimoto, {\em Locally representable
    one-parameter automorphism groups of AF algebras and KMS states},
  Rep. Math. Phys. {\bf 45} (2000), 333-356.


\bibitem[Ki3]{Ki3} A. Kishimoto, {\em Non-commutative shifts and crossed products}, J. Func. Analysis {\bf 200} (2003), 281-300.




\bibitem[Ku]{Ku} J. Kustermans, {\em KMS weights on $C^*$-algebras};  arXiv: 9704008v1.
  
 \bibitem[KV]{KV} J. Kustermans and S. Vaes, {\em Locally compact
    quantum groups}, Ann. Scient. \'Ec. Norm. Sup. {\bf 33}, 2000, 837-934. 
  
  
 


 
 
    
    
    
    
  

\bibitem[MS]{MS} H. Matui and Y. Sato, {\em Decomposition rank of UHF-absorbing $C^*$-algebras}, Duke Math. J. {\bf 163} (2014), 2687-2708.   
  
  
\bibitem[Pe]{Pe} G. K. Pedersen, {\em $C^*$-algebras and their automorphism groups}, Academic Press, London, 1979.


  
\bibitem[PS]{PS} R.T. Powers and S. Sakai, {\em Existence of ground
    states and KMS states for approximately inner dynamics},
  Comm. Math. Phys. {\bf 39} (1975), 273-288.







\bibitem[Sa]{Sa} Y. Sato, {\em The Rohlin property for automorphisms of the Jiang--Su
algebra}, J. Func. Analysis {\bf 259} (2010) 453--476.



 \bibitem[Th1]{Th1} K. Thomsen, {\em KMS weights on graph $C^*$-algebras}, Adv. Math. {\bf 309} (2017) 334--391.
 
\bibitem[Th2]{Th2} K. Thomsen, {\em KMS weights, conformal measures and ends in digraphs}, Adv. Oper. Th. {\bf 5} (2020), 489-607.

\bibitem[Th3]{Th3} K. Thomsen, {\em Phase transition in the CAR algebra}, Adv. Math. {\bf 372} (2020); arXiv:1612.04716v5.


\bibitem[TW]{TW} A. Toms and W. Winter, {\em Strongly self-absorbing $C^*$-algebras}, Trans. Amer. Math. Soc. {\bf 358} (2007), 3999-4029.

\end{thebibliography}
\end{document}